\documentclass[12pt,a4paper]{article}
\usepackage{amsfonts,amssymb,amsmath,graphicx}
\usepackage{theorem}
\usepackage{bm}
\sloppy
\theoremstyle{change}{\theorembodyfont{\rm}
  \newtheorem{defi}{Definition.}[section]
  \newtheorem{rem}[defi]{Remark.}
  }
  \newtheorem{lem}[defi]{Lemma.}
  \newtheorem{prop}[defi]{Proposition.}
  \newtheorem{thm}[defi]{Theorem.}
  \newtheorem{cor}[defi]{Corollary.}
\newenvironment{proof}
{\begin{trivlist} \item {\sl Proof:}}
{\/ $\square$ \end{trivlist}}
\newcommand{\BB}{{\mathbb B}}
\newcommand{\PP}{{\mathbb P}}
\newcommand{\cB}{{\mathcal B}}
\newcommand{\cC}{{\mathcal C}}
\newcommand{\cK}{{\mathcal K}}
\newcommand{\cP}{{\mathcal P}}
\newcommand{\bE}{{\bm{E}}}
\newcommand{\bH}{{\bm{H}}}
\newcommand{\bS}{{\bm{S}}}
\newcommand{\bU}{{\bm{U}}}

\DeclareMathOperator{\Fix}{Fix}
\DeclareMathOperator{\id}{id}

\newcommand{\eps}{{\varepsilon}}
\DeclareMathOperator{\GF}{GF}
\DeclareMathOperator{\GL}{GL}
\DeclareMathOperator{\PG}{PG}
\DeclareMathOperator{\M}{M}
\let\phi=\varphi

\let\theta=\vartheta
\newcommand{\tsk}{$t$-$(s,k,\lambda_t)$}

\newcommand{\SDelimArray}[4]{\hbox{\scriptsize\arraycolsep=.5\arraycolsep
  $\left#1\!\!\begin{array}{*{#3}{c}}#4\end{array}\!\!\right#2$}}

\newcommand{\SMat}{\SDelimArray()}


\date{}

\begin{document}
\title{Divisible designs from twisted dual numbers}
\author{Andrea Blunck \and Hans Havlicek\thanks{Corresponding author.} \and Corrado Zanella}

\maketitle

\centerline{\emph{Dedicated to Helmut M\"{a}urer on the occasion of his 70th birthday}}

\begin{abstract}\noindent
The generalized chain geometry over the local ring $K(\eps;\sigma)$ of twisted
dual numbers, where $K$ is a finite field, is interpreted as a divisible design
obtained from an imprimitive group action. Its combinatorial properties as well
as a geometric model in $4$-space are investigated.

\noindent {\em Mathematics Subject Classification} (2000): 51E05, 51B15,
51E20, 51E25, 51A45.

\noindent {\em Key Words:}  divisible design, chain geometry, local ring,
twisted dual numbers, geometric model.
\end{abstract}
\parskip .8mm
\parindent0cm
\section{Preliminaries}\label{sec:pre}

This paper deals with a special class of \emph{divisible designs}, namely,
those that are chain geometries over certain finite local rings, and their
representation in projective space.

A finite geometry $\Sigma=(\cP,\cB,\parallel)$, consisting of a set $\cP$ of
\emph{points}, a  set $\cB$  of \emph{blocks}, and an equivalence relation
$\parallel$ (\emph{parallel}) on $\cP$,  is called a \tsk-\emph{divisible
design} ($t$-DD for short), if there exist positive integers $t,s,k,\lambda_t$
such that the following axioms hold:
\begin{itemize}
\item Each  block $B$ is a   subset of $\cP$  containing $k$ pairwise
non-parallel points. \item Each parallel class consists of $s$ points. \item
For  each set $Y$ of $t$ pairwise non-parallel points there exist exactly
$\lambda_t$ blocks containing $Y$. \item $t\leq k \leq v/s$, where $v:=|\cP|$.
\end{itemize}

Note that sometimes DDs are called ``group divisible designs''.

A DD with trivial parallel relation, i.e.\ with $s=1$, is an ordinary
\emph{design}. A DD with $k=v/s$ is called \emph{transversal}. In the
subsequent sections we shall deal with transversal $3$-DDs.

A method to construct DDs with a large group of automorphisms is due to A.G.
Spera \cite{spera-92}, using imprimitive group actions: Let $G$ be a group
acting on a (finite) set $\cP$ of ``points'' and leaving invariant an
equivalence relation $\parallel $ (``parallel'').  Let $t$ be a positive
integer such that there are at least $t$ parallel classes, and let $B_0$ be a
set of $k\ge t$ pairwise non-parallel points (the ``base block''). Assume that
$G$ acts transitively on the set of $t$-tuples of pairwise non-parallel points.
Let $\cB$ be the orbit of $B_0$ under $G$, i.e.\ $\cB= \{B_0^g\mid g\in G\}$.
Then $\Sigma=(\cP, \cB,\parallel)$ is a $t$-DD with
\begin{equation}\label{eq:lambda}
\lambda_t= \frac{|G|}{|G_{B_0}|}\cdot\frac{{k\choose t}}{{v/s\choose t} s^t} 
\end{equation}
where $G_{B_0}$ is the (setwise) stabilizer of $B_0$ in $G$ (see
\cite[Prop.~2.3]{spera-92}).

The projective line $\PP(R)$ over a finite local ring $R$ is endowed with an
equivalence relation (usually denoted by $\parallel$). It is invariant under
the action of the general linear group $\GL_2(R)$ on $\PP(R)$. Since $\GL_2(R)$
acts transitively on the set of triples of non-parallel points, any $k$-set
($k\geq 3)$ of mutually non-parallel points of $\PP(R)$ can be chosen as base
block $B_0$ in order to apply Spera's construction of a DD. This is, of course,
a very general approach. Therefore, it is not surprising that not too much can
be said about the corresponding $3$-DDs. It is straightforward to express their
parameters $v$ and $s$, as well as the order of the group $\GL_2(R)$, in terms
of $|R|$ and $|I|$, i.e.\ the cardinality of the unique maximal ideal $I$ of
the given ring $R$. However, in order to calculate the parameter $\lambda_3$ by
virtue of (1), one needs to know the order of the stabilizer of $B_0$ in
$\GL_2(R)$. But it seems hopeless to calculate this order without further
information about the base block $B_0$.
\par
If $R$ is even a finite local algebra over a field $F$, say, then the
projective line $\PP(F)$ over $F$ can be considered as a subset of $\PP(R)$,
and it can be chosen as a base block. All $3$-DDs obtained in this way satisfy
$\lambda_3=1$; they are---up to notational differences---precisely the
(classical) \emph{chain geometries\/} $\Sigma(F,R)$; see \cite{benz-73},
\cite{blu+he-05} or \cite{herz-95}. This was pointed out by Spera
\cite[Example~2.5]{spera-92}. In the cited paper also a series of interesting
DDs are constructed from base blocks which are certain subsets of $\PP(F)$. See
also \cite{giese+h+s-05} for similar results. We mention in passing that
higher-dimensional projective spaces over local algebras give rise to $2$-DDs
\cite{spera-95}.
\par
The divisible designs which are constructed in the present paper arise also
from chain geometries. However, we use this term in a more general form which
was introduced in \cite{blu+h-00a} just a few years ago. The essential
difference is as follows: We consider a finite local ring $R$ containing a
subfield $K$ which is not necessarily in the centre of $R$. Thus \emph{$R$ need
not be a $K$-algebra}, but of course it is an algebra over some subfield of
$K$. As before, we can define $\PP(K)\subset\PP(R)$ to be the base block. This
gives a $3$-DD which coincides with the (generalized) chain geometry
$\Sigma(K,R)$. It is possible to express the parameter $\lambda_3$ of this DD
in algebraic terms (see \cite[Theorem~2.4]{blu+h-00a}), but this is not very
explicit in the general case. Therefore, we focus our attention on a particular
class of local rings, namely twisted dual numbers. If the ``twist'' is
non-trivial, then $3$-DDs with parameter $\lambda_3=|K|$ are obtained.
\par
In Section 4 we present an alternative description of our $3$-DDs in a finite
projective space over $K$.

\section{Twisted dual numbers}\label{sec:tdn}

Let $R$ be a (not necessarily commutative) local ring containing a (not
necessarily central) subfield $K$. In view of our objective to construct DDs,
we will later restrict ourselves to finite rings and fields, and hence we
assume from the beginning that $K$ is commutative. As usual, we denote by $R^*$
the group of units (invertible elements) of $R$. We set $I:=R\setminus R^*$;
since $R$ is local we have that $I$ is an ideal.

The ring $R$ is in a natural way a left vector space over $K$, sometimes
written as ${}_KR$. We assume that $\dim(_KR)=2$. Moreover, we assume that $R$
is not a field. We want to determine the structure of $R$. The ideal $I$ is a
non-trivial subspace of the vector space ${ }_KR$. So $\dim(_KI)=1$, and
$I=K\eps$ for some $\eps\in R\setminus K$. Then $1,\eps$ is a basis of ${
}_KR$, and we may write $R=K + K\eps$.

In order to describe the multiplication in $R$ we first observe that $\eps^2\in
I$, so $\eps^2= b\eps$ for some $b\in K$. This implies $(\eps-b)\eps=0$, whence
also $\eps-b\in I$ and so $b=0$. For each $x\in K$ we have $\eps x\in I$, so
there is a unique $x'\in K$ such that $\eps x=x'\eps$. One can easily check
that $\sigma:x\mapsto x'$ is an injective field endomorphism.

Conversely, given a field $K$ and an injective endomorphism $\sigma$ of $K$ we
obtain a ring of \emph{twisted dual numbers} $R=K(\eps;\sigma)=K + K\eps$ with
multiplication
\begin{equation*}
(a+b\eps)(c+d\eps)=ac+(ad+bc^\sigma)\eps.
\end{equation*}
In the special case that $\sigma=\id$ this is the well known commutative ring
$K(\eps)$ of \emph{dual numbers} over $K$.

The subfield  $\Fix(\sigma)$ of $K$ fixed elementwise by $\sigma$ will be
called $F$. So $F=K$ if, and only if, $\sigma=\id$.

The units of $R$ are exactly the elements of $R\setminus I=K^*  + K\eps$. One
can easily check that the inverse of a unit $u=a+b\eps$ (with $a,b\in K, a\ne
0$) is
\begin{equation}\label{eq:inv}
u^{-1}=a^{-1}-a^{-1}b(a^\sigma)^{-1}\eps.
\end{equation}

Later we shall need the following algebraic statements on $R=K(\eps;\sigma)$.
\begin{lem}\label{lem:semi}
The multiplicative group $R^*$ is the semi-direct product of $K^*$ and the
normal subgroup
\begin{equation}\label{eq:U}
U =1 + K\eps =\{1+b\eps\mid b\in K\}.
\end{equation}
\end{lem}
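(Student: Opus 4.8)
The plan is to verify the standard criterion for an internal semidirect product: exhibit $K^{*}$ and $U$ as subgroups of $R^{*}$, show $U$ is normal, show $K^{*}\cap U=\{1\}$, and show $R^{*}=K^{*}U$. First I would note that $K^{*}$ is obviously a subgroup of $R^{*}$ (it is the unit group of the subfield $K$, and $K^{*}\subseteq R^{*}$ since each nonzero element of $K$ is a unit of $R$). Next, using the multiplication rule $(a+b\eps)(c+d\eps)=ac+(ad+bc^{\sigma})\eps$, a short computation gives $(1+b\eps)(1+d\eps)=1+(b+d)\eps$, so $U$ is closed under multiplication and, reading off \eqref{eq:inv} with $a=1$, one has $(1+b\eps)^{-1}=1-b\eps\in U$; hence $U$ is a subgroup. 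The intersection $K^{*}\cap U$ is trivial because an element of $U$ has $\eps$-coefficient... well, $1+b\eps\in K$ forces $b=0$ (as $1,\eps$ is a $K$-basis of $R$), giving only the identity.

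For the factorization $R^{*}=K^{*}U$, recall that $R^{*}=K^{*}+K\eps=\{a+b\eps\mid a\in K^{*},\ b\in K\}$. Given such a unit $u=a+b\eps$, I would write $u=a(1+a^{-1}b\eps)$ — here one must be slightly careful about left versus right factors since $R$ need not be commutative, but $a$ lies in the (commutative) subfield $K$ and $a(1+c\eps)=a+ac\eps$ by the multiplication rule, so choosing $c=a^{-1}b$ works and $u=a\cdot(1+a^{-1}b\eps)\in K^{*}U$. Combined with $|K^{*}\cap U|=1$, this shows every unit factors uniquely as $K^{*}U$.

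Finally, for normality of $U$ in $R^{*}$, it suffices (since $R^{*}=K^{*}U$ and $U$ normalizes itself) to check that $K^{*}$ normalizes $U$, i.e. that $a(1+b\eps)a^{-1}\in U$ for all $a\in K^{*}$, $b\in K$. Using the multiplication rule, $a(1+b\eps)=a+ab\eps$, and then $(a+ab\eps)a^{-1}=aa^{-1}+ab(a^{-1})^{\sigma}\eps=1+ab(a^{\sigma})^{-1}\eps\in U$, using that $\sigma$ is a field endomorphism so $(a^{-1})^{\sigma}=(a^{\sigma})^{-1}$. (Alternatively one observes $U=1+I$ where $I=K\eps$ is a two-sided ideal, which immediately makes $U$ normal in $R^{*}$; this is perhaps the cleanest route.) The only place demanding any care is keeping track of the twist $\sigma$ and the noncommutativity when moving scalars past $\eps$ — but since all the scalars involved in the factorization lie in $K$ and the relation $\eps a=a^{\sigma}\eps$ is explicit, no genuine obstacle arises; it is a routine verification once the four semidirect-product conditions are listed.
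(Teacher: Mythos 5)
Your proof is correct and follows essentially the same route as the paper, which simply states that the lemma follows by direct computation, using the inverse formula \eqref{eq:inv} to verify normality of $U$; you have merely written out the four internal-semidirect-product conditions explicitly (and your parenthetical remark that $U=1+K\eps$ with $K\eps$ a two-sided ideal is indeed the cleanest way to see normality).
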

\begin{proof}
Direct computation, using (\ref{eq:inv}) for showing that $U$ is normal in
$R^*$.
\end{proof}

\begin{lem}\label{lem:N}
Let $N$ be the normalizer of $K^*$ in $R^*$, i.e.,
\begin{equation}\label{eq:N}
N=\{n\in R^*\mid n^{-1}K^*n=K^*\}.
\end{equation}
Then $N=R^*$ if $\sigma=\id$ and $N=K^*$ otherwise.
\end{lem}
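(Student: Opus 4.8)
The plan is to split into the two cases $\sigma=\id$ and $\sigma\neq\id$. If $\sigma=\id$ then $R=K(\eps)$ is commutative, so every unit centralises $K^*$ and trivially $N=R^*$. All the work then lies in the case $\sigma\neq\id$, where the goal is $N=K^*$. The inclusion $K^*\subseteq N$ is immediate from commutativity of $K$: for $n\in K^*$ and $k\in K^*$ one has $n^{-1}kn=k$, hence $n^{-1}K^*n=K^*$.

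For the reverse inclusion I would use Lemma~\ref{lem:semi}. Write an arbitrary unit as $n=au$ with $a\in K^*$ and $u=1+b\eps\in U$. Since $K^*\subseteq N$ and $N$ is a subgroup of $R^*$, membership $n\in N$ is equivalent to $u\in N$, so it suffices to determine which elements of $U$ normalise $K^*$. From~(\ref{eq:inv}) with $a=1$ we get $u^{-1}=1-b\eps$, and a short application of the multiplication rule gives, for every $x\in K$,
\begin{equation*}
u^{-1}xu=x+b\,(x-x^\sigma)\eps .
\end{equation*}
Thus $u^{-1}xu\in K$ for all $x\in K^*$ if and only if $b\,(x-x^\sigma)=0$ for all $x\in K$, i.e.\ if and only if $b=0$ or $\sigma=\id$. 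Hence, when $\sigma\neq\id$, the only element of $U$ whose conjugation sends $K^*$ into $K^*$ is $u=1$, so $n=a\in K^*$, and therefore $N\subseteq K^*$.

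The one point to watch — and the only place the argument is not completely automatic — is that $N$ is defined by the two-sided equality $n^{-1}K^*n=K^*$, whereas the computation above directly shows only the inclusion $n^{-1}K^*n\subseteq K^*$. This causes no difficulty: $N$ is contained in the set of units satisfying that inclusion, and the displayed formula shows (for $\sigma\neq\id$) that this larger set already equals $K^*$; together with $K^*\subseteq N$ this gives $N=K^*$. (Alternatively, one may note that $x\mapsto x+b(x-x^\sigma)\eps$ maps $K^*$ onto $K^*$ only when $b=0$.) So the genuine content of the proof is just the conjugation formula, and there is no real obstacle beyond carrying out that small computation carefully.
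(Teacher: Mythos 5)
Your proof is correct and takes essentially the same route as the paper: the paper conjugates a single $x\in K\setminus F$ by a general unit $n=a+b\eps$, obtains $n^{-1}xn=x+a^{-1}b(x-x^\sigma)\eps$ and concludes $b=0$, so your preliminary reduction to $U$ via Lemma~\ref{lem:semi} is just a cosmetic normalization of the same conjugation computation. The one-sided versus two-sided point you flag is handled correctly, and indeed only the inclusion $n^{-1}K^*n\subseteq K^*$ is ever needed.
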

\begin{proof}
For $\sigma=\id$ the assertion is clear. So let $\sigma\ne\id$ and
$n=a+b\eps\in N$. Take an element  $x\in K\setminus F$. Using (\ref{eq:inv}) we
get $n^{-1}xn=x+a^{-1}b(x-x^{\sigma})\eps$, which must belong to $K$ since
$n\in N$. Because of our choice of $x$ we have  $x-x^\sigma\ne 0$, whence
$b=0$, as desired.
\end{proof}

\section{The associated  DD}\label{sec:DD}
In this section we construct a $3$-DD using the ring $R=K(\eps;\sigma)$. The
construction is a special case of Spera's construction method described in
Section~\ref{sec:pre} (see also \cite[Section~2.3]{havl-04}). On the other
hand, the resulting DD is nothing else than the (generalized) \emph{chain
geometry} over $(K,R)$ (compare \cite{blu+h-00a}, for details on ordinary chain
geometries see \cite{blu+he-05}, \cite{herz-95}).

From now on we assume that $R$, and hence also $K$ and $F$,  are finite. Then
$F=\GF(m)$ for some prime power $m$, and $K=\GF(q)$ with $q$ a power of $m$.
Moreover, $\sigma$ now is an automorphism of $K$, namely, $\sigma: x\mapsto
x^m$.

The construction is based on the action of the group $G =\GL_2(R)$ of
invertible $2\times2$-matrices with entries in $R$ on the \emph{projective
line} over $R$, i.e., on the set
\begin{equation}\label{eq:PR}
    \PP(R)=\{R(a,b)\le R^2\mid \exists\, c,d\in R:\SMat2{a&b\\c&d}\in G\}.
\end{equation}
Since $R$ is local, each pair $(a,b)$ as in (\ref{eq:PR}) has the property that
at least one of the two elements $a,b$ is invertible, because otherwise the
existence of an inverse matrix $\SMat2{x&*\\y&*}$ would lead to the
contradiction $1=ax+by\in I$. So $\PP(R)$ is the disjoint union
\begin{equation}\label{eq:PRlocal}
\PP(R)=\{R(x,1)\mid x\in R\}\cup\{R(1,z)\mid z\in I\}.
\end{equation}
On $\cP =\PP(R)$ we have an equivalence relation $\parallel$ given by
\begin{equation}\label{eq:par}
R(a,b)\parallel R(c,d) :\iff \SMat2{a&b\\c&d}\notin G.
\end{equation}
More explicitly, this means for arbitrary $x,y\in R, z,w\in I$:
 \begin{equation}\label{eq:parlocal}
    R(1,z)\parallel R(1,w); \; R(x,1) \nparallel R(1,z); \; \big(R(x,1)\parallel
    R(y,1) \Leftrightarrow x-y\in I\big).
\end{equation}
Using the description in (\ref{eq:parlocal}) one can see that $\parallel$ in
fact is an equivalence relation.

Let us recall two facts (see \cite[1.2.2]{herz-95} and
\cite[Prop.~1.3.3]{herz-95}, where non-parallel points are called ``distant''):
The group $G$ acts on $\cP$ leaving $\parallel$ invariant. Moreover, $G$ acts
transitively on the set of triples of pairwise non-parallel points of $\cP$. By
virtue of this action of $G$ and (\ref{eq:parlocal}), any two parallel classes
have the same cardinality $s=|I|$.

In order to apply Spera's method we now need a \emph{base block} consisting of
pairwise non-parallel points. As usual for chain geometries, we use the
projective line over $K$.

Since $K$ is a subfield of $R$, the projective line $\PP(K)$ can be seen as a
subset $B_0$ of $\cP=\PP(R)$ as follows:

\begin{equation}\label{eq:B0}
B_0 =\PP(K)=\{R(x,1)\mid x\in K\}\cup\{R(1,0)\}.
\end{equation}

Let $\cB =B_0^G$. Then we get the following.

\begin{thm}\label{prop:DD}
The structure $\Sigma=(\cP,\cB,\parallel)$ is a transversal $3$-DD with
parameters $v=q^2+q$, $s=q$, $k=q+1 (=v/s)$ and
\begin{equation*}
    \lambda_3=
    \begin{cases}
    1 & \mbox{\;  if \; } \sigma= \id, \\
    q & \mbox{\;  if \; } \sigma \ne \id. \end{cases}
\end{equation*}
\end{thm}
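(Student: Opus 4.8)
The plan is to verify the parameters by a mixture of direct counting and an application of Spera's formula \eqref{eq:lambda}. First I would establish the ``easy'' parameters. Since $K=\GF(q)$, the base block $B_0=\PP(K)$ has $k=q+1$ points, and by \eqref{eq:parlocal} these are pairwise non-parallel (two affine points $R(x,1),R(y,1)$ with $x,y\in K$ are non-parallel because $x-y\in K^*\subseteq R^*$, i.e.\ $x-y\notin I$, and $R(1,0)$ is non-parallel to all of them). The parallel classes all have size $s=|I|=|K\eps|=q$, as already noted in the text. From \eqref{eq:PRlocal} the point set has cardinality $v=|R|+|I|=q^2+q$, so $v/s=q+1=k$ and the design is transversal; the inequality $t\le k\le v/s$ with $t=3$ holds since $q\ge 2$. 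Together with the facts quoted from \cite{herz-95}---that $G=\GL_2(R)$ preserves $\parallel$ and acts transitively on triples of pairwise non-parallel points---Spera's construction applies and yields a $3$-DD with $\lambda_3$ given by \eqref{eq:lambda}.

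The substantive part is computing $\lambda_3$ via \eqref{eq:lambda}, which requires $|G|$ and $|G_{B_0}|$. For $|G|=|\GL_2(R)|$ I would use that $R$ is local with maximal ideal $I$ of index $|R/I|=q$: reduction mod $I$ gives a surjection $\GL_2(R)\to\GL_2(K)$ whose kernel is $1+\M_2(I)$, so $|G|=|\GL_2(K)|\cdot|I|^4=(q^2-1)(q^2-q)\cdot q^4$. For the stabilizer $G_{B_0}$, the key observation is that $B_0=\PP(K)$ is itself a chain geometry with $\lambda=1$, so the \emph{setwise} stabilizer acts on $B_0$ as a group of permutations; more usefully, I would identify the subgroup of $G$ fixing $B_0$ and compute its order, then plug everything into \eqref{eq:lambda}:
\begin{equation*}
\lambda_3=\frac{|G|}{|G_{B_0}|}\cdot\frac{\binom{q+1}{3}}{\binom{q+1}{3}q^3}=\frac{|G|}{|G_{B_0}|\,q^3}.
\end{equation*}
So everything reduces to showing $|G_{B_0}|=|G|/q^3$ when $\sigma=\id$ and $|G_{B_0}|=|G|/q^4$ when $\sigma\ne\id$.

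The main obstacle is therefore the determination of $|G_{B_0}|$, and this is where Lemma~\ref{lem:semi} and especially Lemma~\ref{lem:N} enter. The subgroup of $G$ acting trivially \emph{as a permutation} on $B_0$ is easy: it should consist of matrices inducing the identity on $\PP(K)$, which (using that $\PGL_2(K)$ acts faithfully and sharply $3$-transitively on $\PP(K)$) forces such a matrix to be scalar modulo an element of $R$ that centralizes $K$; the relevant ``scalars'' $a\cdot\id$ with $a\in R^*$ fixing $B_0$ pointwise must satisfy $axa^{-1}\in K$ for all $x\in K$, i.e.\ $a\in N$, the normalizer of $K^*$ from Lemma~\ref{lem:N}. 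This is exactly the point where the two cases diverge: by Lemma~\ref{lem:N}, $N=R^*$ (order $(q-1)q$) when $\sigma=\id$, but $N=K^*$ (order $q-1$) when $\sigma\ne\id$. Meanwhile the image of $G_{B_0}$ in $\PGL_2(K)$ is all of $\PGL_2(K)$ (it contains $\GL_2(K)\le\GL_2(R)$, which clearly stabilizes $B_0$), of order $(q^2-1)(q^2-q)/(q-1)=(q-1)q(q+1)$. Hence
\begin{equation*}
|G_{B_0}|=|\PGL_2(K)|\cdot|\{a\in R^*: a\cdot\id\in G_{B_0}\}|=(q-1)q(q+1)\cdot|N|,
\end{equation*}
giving $|G_{B_0}|=(q-1)q(q+1)\cdot(q-1)q$ if $\sigma=\id$ and $(q-1)q(q+1)\cdot(q-1)$ if $\sigma\ne\id$. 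Dividing $|G|=(q-1)q(q+1)\cdot(q-1)q\cdot q^4$ by $|G_{B_0}|\,q^3$ then yields $\lambda_3=1$ in the first case and $\lambda_3=q$ in the second. The care needed is in justifying that the full kernel of $G_{B_0}\to\PGL_2(K)$ is precisely the set of scalar matrices $a\cdot\id$ with $a\in N$ (no non-scalar matrix can fix the four reference points $R(0,1),R(1,0),R(1,1)$ and all of $B_0$), and in confirming that every coset of this kernel is hit, for which exhibiting $\GL_2(K)$ inside $G_{B_0}$ suffices.
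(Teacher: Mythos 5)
Your route is genuinely different from the paper's. The paper does not touch the stabilizer $G_{B_0}$ at all: it quotes \cite[Theorem~2.4]{blu+h-00a}, which gives $\lambda_3=|R^*|/|N|$ with $N$ the normalizer of $K^*$ in $R^*$, and then simply applies Lemma~\ref{lem:N}. You instead evaluate Spera's formula (\ref{eq:lambda}) directly, which amounts to reproving that quoted result in this special case: your counts $|G|=|\GL_2(K)|\cdot|I|^4=(q^2-1)(q^2-q)q^4$ and $|G_{B_0}|=|\PGL_2(K)|\cdot|N|=(q^3-q)|N|$ are correct and do give $\lambda_3=1$ for $\sigma=\id$ and $\lambda_3=q$ for $\sigma\ne\id$ (indeed $|G|/(|G_{B_0}|q^3)=|R^*|/|N|$). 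What your approach buys is self-containedness: one does not need the external Theorem~2.4, only Lemma~\ref{lem:N} and a stabilizer computation; the price is exactly that computation, which is where your write-up is incomplete.

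The gap is in the identity $|G_{B_0}|=|\PGL_2(K)|\cdot|N|$. You establish the lower bound on the induced permutation group (it contains $\PGL_2(K)$, via $\GL_2(K)\le G_{B_0}$) and discuss the kernel, but you tacitly assume that every element of $G_{B_0}$ induces a \emph{projectivity} of $\PP(K)$ --- the phrase ``the image of $G_{B_0}$ in $\PGL_2(K)$'' presupposes this. If the induced group were larger (say containing a Frobenius-type permutation), $|G_{B_0}|$ would be larger and $\lambda_3$ smaller, so this upper bound must be proved. It follows from the computation you only hint at: a direct check shows that the stabilizer in $\GL_2(R)$ of the three points $R(1,0),R(0,1),R(1,1)$ is exactly the group $\widehat{R^*}$ of (\ref{eq:Rdach}); among these scalars, those stabilizing $B_0$ are exactly the $u\cdot\id$ with $u\in N$, and they fix $B_0$ \emph{pointwise} because here $N$ coincides with the centralizer of $K$ in $R^*$ (for $\sigma=\id$ since $R$ is commutative, for $\sigma\ne\id$ since $N=K^*$). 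Hence the induced group is $3$-transitive with trivial three-point stabilizers, so it equals the sharply $3$-transitive $\PGL_2(K)$, and simultaneously the kernel is identified as $\{u\cdot\id\mid u\in N\}$ of order $|N|$. Two further slips to repair: fixing $B_0$ pointwise requires $a^{-1}xa=x$ for all $x\in K$ (centralizing), not merely $a^{-1}xa\in K$ as you wrote --- the two conditions coincide here, but that needs saying; and in your final sentence $|G|$ acquires a spurious factor $q$, since $(q^2-1)(q^2-q)=(q-1)q(q+1)\cdot(q-1)$, not $(q-1)q(q+1)\cdot(q-1)q$ --- with your displayed value the division would yield $q$ and $q^2$ instead of $1$ and $q$, so the earlier, correct expression for $|G|$ must be used.
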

\begin{proof}
From Spera's theorem we know that $\Sigma$ is a DD. The values of $v$, $s$, and
$k$ are obtained from (\ref{eq:PRlocal}), (\ref{eq:parlocal}), and
(\ref{eq:B0}), respectively. By \cite[Theorem~2.4]{blu+h-00a}, we have
$\lambda_3=|R^*|/|N|$, where $N$ is the normalizer defined in (\ref{eq:N}). By
Lemma \ref{lem:N} we have two cases: If $\sigma=\id$, the normalizer $N$
coincides with $R^*$ and so $\lambda_3=1$. If $\sigma\ne \id$, the normalizer
$N$ equals $K^*$, whence $\lambda_3=|R^*|/|N|=(q-1)q/(q-1)=q$.
\end{proof}
The equation $\sigma=\id$ holds precisely when $K$ lies in the centre of $R$;
in this case our DD is an ordinary chain geometry, namely, the \emph{Miquelian
Laguerre plane} over the algebra of dual numbers (see
\cite[I.2,~II.4]{benz-73}).

We mention here that the  parameter $\lambda_3$ could also be computed directly
using the formula
\begin{equation}\label{eq:lambdaA}
\lambda_3=\frac{|G|}{|G_{B_0}|}\cdot\frac{1}{s^3}
\end{equation}
(see (\ref{eq:lambda}),  note that $k=v/s$).

We add without proof that the $3$-DD $\Sigma$ can also be described as a
\emph{lifted DD\/} in the sense of \cite[Theorem~2.5]{blu+h+z-07}, using the
point set $\cP$, the equivalence relation $\parallel$, the group
 \begin{equation*}\label{eq:H}
H=\left\{\SMat2{1+a\eps&b\eps\\c\eps&1+d\eps}\mid a,b,c,d\in K\right\}
\end{equation*}
which acts on $\cal P$, and the base block $B_0$ as (trivial) base DD. However,
this alternative approach does not immediately show the large group of
automorphisms given by the action of $G$ on $\cP$.

We now have a closer look at the case $\sigma\ne\id$. We want to determine the
$q$ blocks through three given pairwise non-parallel points more explicitly.
Because of the transitivity properties of $G$ it suffices to consider the
points $\infty=R(1,0)$, $0=R(0,1)$, $1=R(1,1)$. From
\cite[Theorem~2.4]{blu+h-00a} we know the following: The blocks through
$\infty, 0,1$ are exactly the images of $B_0$ under the group
\begin{equation}\label{eq:Rdach}
    \widehat{R^*} =\left\{\SMat2{u&0\\0&u}\mid u\in R^*\right\},
\end{equation}
and two elements $\omega=\SMat2{u&0\\0&u}$ and $\omega'=\SMat2{u'&0\\0&u'}$ of
$\widehat{R^*}$ determine the same block if, and only if, $Nu=Nu'$, with $N$ as
in   (\ref{eq:N}). So from Lemmas \ref{lem:N} and \ref{lem:semi} we obtain:

\begin{lem}\label{lem:blocks}
Let $\sigma\ne\id$. Then the blocks containing $\infty=R(1,0)$, $0=R(0,1)$,
$1=R(1,1)$ are exactly the $q$ sets
\begin{equation}\label{eq:blocks}
    B_0^\omega,\; {\mbox{with}\; \; }\omega=\SMat2{1+b\eps &0\\0&1+b\eps }, \; b
\in K.
\end{equation}
\end{lem}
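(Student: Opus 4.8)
The plan is to unwind the abstract description of the blocks through $\infty,0,1$ (given just before the statement) into the concrete list (\ref{eq:blocks}), using the two lemmas as indicated. First I would recall the facts just cited from \cite[Theorem~2.4]{blu+h-00a}: the blocks through $\infty,0,1$ are exactly the images $B_0^\omega$ with $\omega=\SMat2{u&0\\0&u}\in\widehat{R^*}$, and $B_0^\omega=B_0^{\omega'}$ precisely when $Nu=Nu'$. Hence the set of blocks through $\infty,0,1$ is in bijection with the coset space $N\backslash R^*$. Since $\sigma\ne\id$, Lemma \ref{lem:N} gives $N=K^*$, so the blocks correspond bijectively to the right cosets $K^*\backslash R^*$.

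Next I would use Lemma \ref{lem:semi}: $R^*=K^*U$ with $U=1+K\eps$ normal and $K^*\cap U=\{1\}$, so every element of $R^*$ lies in a unique coset $K^*(1+b\eps)$, and distinct $b\in K$ give distinct cosets. Therefore a complete and irredundant set of coset representatives for $K^*\backslash R^*$ is $\{1+b\eps\mid b\in K\}$, which has exactly $q$ elements (consistent with $\lambda_3=q$ from Theorem \ref{prop:DD}). Taking $u=1+b\eps$ in the matrix $\omega$ yields precisely the $q$ matrices displayed in (\ref{eq:blocks}), and the corresponding images $B_0^\omega$ are exactly the $q$ blocks through $\infty,0,1$, with no repetitions. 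One should also note $1+b\eps\in R^*$ for every $b\in K$ (it is a unit since it lies in $K^*+K\eps=R\setminus I$), so each such $\omega$ is indeed in $\widehat{R^*}$ and the construction makes sense.

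The argument is essentially a two-line bookkeeping exercise once the cited external result is in place, so there is no real obstacle; the only thing to be careful about is the side on which the cosets are taken. Because $K^*$ is normalized by $N$ but not (for $\sigma\ne\id$) by all of $R^*$, the relation $Nu=Nu'$ must be read as equality of \emph{right} cosets, and one must check that the left–right distinction does not bite: here it does not, because $N=K^*$ is a genuine normal subgroup of the (abelian-by-normal) group $R^*=K^*U$? — actually $K^*$ need not be normal in $R^*$, but the semidirect product structure of Lemma \ref{lem:semi} still shows each right coset $K^*u$ meets $U$ in exactly one point $1+b\eps$, which is all that is needed. Thus the final step is simply to substitute these representatives into $\widehat{R^*}$ and read off (\ref{eq:blocks}).
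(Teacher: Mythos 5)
Your argument is correct and follows exactly the paper's (implicit) proof: quote the cited result that the blocks through $\infty,0,1$ are the $B_0^\omega$ with $\omega\in\widehat{R^*}$ up to the coset relation $Nu=Nu'$, use Lemma~\ref{lem:N} to get $N=K^*$ for $\sigma\ne\id$, and use the semidirect decomposition of Lemma~\ref{lem:semi} to see that $\{1+b\eps\mid b\in K\}$ is a complete, irredundant set of coset representatives, giving exactly the $q$ blocks in (\ref{eq:blocks}). Your side remark about left versus right cosets is resolved correctly (each coset of $K^*$ meets $U$ in exactly one element, on either side), so there is no gap.
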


We now give an explicit description of  the action of the group
\begin{equation}\label{eq:Udach}
\widehat{U} =\left\{\SMat2{u&0\\0&u}\mid u\in U\right\}=\left\{\SMat2{1+b\eps
&0\\0&1+b\eps}\mid b\in K\right\},
\end{equation}
associated to $U$ (see (\ref{eq:U})), on $\cP=\PP(R)$.

A direct calculation shows that each $\omega=\SMat2{u&0\\0&u}$, with $u\in
R^*$, acts on $\cP$ via ``conjugation'' as follows:
\begin{equation}
\omega: R(x,1)\mapsto R(u^{-1}xu,1),\; \; R(1,z)\mapsto R(1,u^{-1}zu),
\end{equation}
where, as before, $x\in R, z\in I$. For $u=1+b\eps\in U$ this yields, using
(\ref{eq:inv}),
\begin{equation}\label{eq:omegaU}
\omega: R(x,1)\mapsto R(x+b(x_1-x_1^\sigma)\eps,1),\; \; R(1,z)\mapsto R(1,z),
\end{equation}
where $x=x_1+x_2\eps$. So the mapping $\omega\in\widehat U$ of
(\ref{eq:omegaU}) maps each point to a parallel one. Moreover, it fixes exactly
those elements of the base block $B_0=\PP(K)$ that belong to the subset
$\PP(F)$. This subset in turn is the intersection of all blocks through
$\infty, 0,1$ (compare (\ref{eq:blocks})); such intersections are also called
\emph{traces} (in German: ``F\"{a}hrten'', see \cite{benz-73}, \cite{blu+h-00a}).

We consider a parallel class on which $\widehat U$ does not act trivially. By
(\ref{eq:omegaU}) this is  the parallel class of some point $p=R(x_1,1)$, where
$x_1\in K\setminus F$ and consequently  $p\in B_0\setminus\PP(F)$. Then
$\widehat U $ acts regularly on the parallel class under consideration. As a
matter of fact, for  each $p'$ parallel to $p$, which  has the form
$p'=R(x_1+x_2\eps,1)$, there is a unique $b\in K$ with $x_2=b(x_1-x_1^\sigma)$,
so $p^\omega=p'$, with $\omega$ as in (\ref{eq:omegaU}). This means that for
each $p'\parallel p$ there is exactly one block through $\infty, 0,1$ that
contains $p'$ (and each block through $\infty, 0,1$ is obtained in this way, as
each block meets all parallel classes).

All these results can be carried over to an arbitrary triple of pairwise
non-parallel points, using the action of $G$. So we have the following.

\begin{prop}\label{prop:blockstraces} Let $\sigma\ne\id$.
Let $p_1,p_2,p_3\in \cP$ be pairwise non-parallel.  Let $T$ be the intersection
of all blocks through $p_1,p_2,p_3$, and let $C$ be a parallel class not
meeting $T$. Then the following hold.
\begin{enumerate}
\item There is a $g\in G$ such that $T=\PP(F)^g$. \item Each block through
$p_1,p_2,p_3$ meets $C$, and for each $x\in C$ there is a (unique) block
through $p_1,p_2,p_3,x$.
\end{enumerate}
\end{prop}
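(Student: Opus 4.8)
The plan is to reduce the general statement to the special triple $\infty=R(1,0)$, $0=R(0,1)$, $1=R(1,1)$ already treated in the preceding paragraphs, and then transport everything by an element of $G$. Since $G$ acts transitively on triples of pairwise non-parallel points, there is a $g\in G$ with $p_1^g=\infty$, $p_2^g=0$, $p_3^g=1$; because $G$ preserves $\parallel$ and maps blocks to blocks, it suffices to prove the two assertions for this distinguished triple and the parallel class $C^g$, and then pull back via $g^{-1}$.

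For assertion (a): by Lemma~\ref{lem:blocks} the blocks through $\infty,0,1$ are exactly the $q$ sets $B_0^\omega$ with $\omega=\SMat2{1+b\eps&0\\0&1+b\eps}$, $b\in K$, i.e.\ the $\widehat U$-orbit of $B_0=\PP(K)$. Hence $T$ (for this triple) is $\bigcap_{b\in K} B_0^\omega$. By the explicit formula (\ref{eq:omegaU}), $\omega$ fixes $R(x,1)$ iff $x_1-x_1^\sigma=0$ (taking $b$ to range over all of $K$, including $b\neq 0$), i.e.\ iff $x\in F+K\eps$ intersected with $B_0$, which is $\PP(F)$; the point $R(1,0)$ is fixed by every such $\omega$ as well. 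So $T=\PP(F)$ for the distinguished triple, and in general $T=\PP(F)^{g^{-1}}$, giving $g^{-1}$ (or its relabelling) as the required group element.

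For assertion (b): first, every block meets every parallel class — each block is a $G$-image of $B_0=\PP(K)$, and $\PP(K)$ plainly contains one point from each of the $q=s$ parallel classes that partition its $q+1 = v/s$ points into... actually more directly, a block has $k=q+1$ pairwise non-parallel points and there are exactly $q+1$ parallel classes (since $v/s=q+1$), so a block is a transversal, meeting each class exactly once. In particular each block through $p_1,p_2,p_3$ meets $C$. For uniqueness and existence of a block through $p_1,p_2,p_3,x$ when $x\in C$ and $C\cap T=\emptyset$: transport to the distinguished triple, so $C^g$ is a parallel class not meeting $\PP(F)$, hence by (\ref{eq:omegaU}) it is the class of some $p=R(x_1,1)$ with $x_1\in K\setminus F$; then $\widehat U$ acts on this class, and as shown in the paragraph before Proposition~\ref{prop:blockstraces}, for each $p'=R(x_1+x_2\eps,1)$ parallel to $p$ there is a unique $b\in K$ with $x_2=b(x_1-x_1^\sigma)$ (note $x_1-x_1^\sigma\neq 0$), so exactly one $\omega\in\widehat U$ with $p^\omega=p'$, hence exactly one block $B_0^\omega$ through $\infty,0,1,p'$. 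Pulling back by $g^{-1}$ yields the claim.

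The only genuinely delicate point is the regularity of $\widehat U$ on a non-fixed parallel class, i.e.\ that $b\mapsto b(x_1-x_1^\sigma)$ is a bijection $K\to K$ — but this is immediate since $x_1-x_1^\sigma\in K^*$ for $x_1\notin F$, so the map is multiplication by a unit. Everything else is bookkeeping with the $G$-action and the already-established facts; I expect the writeup to be short.
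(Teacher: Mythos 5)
Your proposal is correct and follows essentially the same route as the paper: the paper's argument is precisely the discussion preceding the proposition, namely reducing to the triple $\infty,0,1$ via the transitivity of $G$, using Lemma~\ref{lem:blocks} and formula (\ref{eq:omegaU}) to identify the trace with $\PP(F)$, and exploiting the regular action of $\widehat U$ on a parallel class of a point $R(x_1,1)$ with $x_1\in K\setminus F$, together with the fact that each block meets every parallel class. Your added remarks (the transversality count $k=q+1=v/s$ and the unit $x_1-x_1^\sigma\in K^*$) only make explicit what the paper leaves implicit.
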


\begin{cor}\label{cor:blocks}
Let $p_1,p_2,p_3$ be pairwise non-parallel, let $T$ be the intersection of all
blocks through $p_1,p_2,p_3$, and let $x\nparallel p_1,p_2,p_3$. Then the
number of blocks through $p_1,p_2,p_3,x$ is
\begin{itemize}
\item $q$, if  $x\in T$, \item $0$, if $x\notin T$, but $x\parallel x'$ for
some $x'\in T$, \item $1$, otherwise.
\end{itemize}
\end{cor}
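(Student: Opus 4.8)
The plan is to deduce all three cases from Proposition~\ref{prop:blockstraces} together with basic properties of the divisible design $\Sigma$. First I would dispose of the case $x\in T$: since $T$ is the intersection of \emph{all} blocks through $p_1,p_2,p_3$, every such block automatically contains $x$, so the number of blocks through $p_1,p_2,p_3,x$ equals the number of blocks through $p_1,p_2,p_3$, which is $\lambda_3=q$ by Theorem~\ref{prop:DD} (we are in the case $\sigma\ne\id$). This settles the first bullet.

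Next I would handle the second bullet, $x\notin T$ but $x\parallel x'$ for some $x'\in T$. Here I would argue by contradiction: if some block $B$ contained $p_1,p_2,p_3,x$, then $B$ would be one of the $q$ blocks through $p_1,p_2,p_3$, hence $B\supseteq T$, so $B$ would contain $x'$ as well. But a block consists of pairwise non-parallel points, and $x\parallel x'$ with $x\ne x'$ (they differ since $x\notin T\ni x'$), a contradiction. Therefore no such block exists and the count is $0$.

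For the third bullet, $x\nparallel p_1,p_2,p_3$ with $x\notin T$ and $x$ not parallel to any point of $T$, I would let $C$ be the parallel class of $x$. Then $C$ does not meet $T$ (otherwise $x$ would be parallel to a point of $T$), so Proposition~\ref{prop:blockstraces}(b) applies and gives exactly one block through $p_1,p_2,p_3,x$. This completes the proof.

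The main obstacle is essentially bookkeeping rather than any real difficulty: one must be careful that the three listed cases are genuinely exhaustive for points $x\nparallel p_1,p_2,p_3$ (they are, since either $x\in T$, or $x\notin T$ and meets the parallel class of some point of $T$, or neither), and one must invoke the correct earlier results—the value $\lambda_3=q$ from Theorem~\ref{prop:DD} for the first bullet and Proposition~\ref{prop:blockstraces} for the third. No computation with the ring $R$ is needed.
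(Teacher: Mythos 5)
Your argument is correct and is exactly the route the paper intends: the corollary is stated without proof as an immediate consequence of Proposition~\ref{prop:blockstraces} together with $\lambda_3=q$ from Theorem~\ref{prop:DD} (in the case $\sigma\ne\id$) and the fact that blocks consist of pairwise non-parallel points. Your case analysis, including the check that the three cases are exhaustive, matches the paper's implicit reasoning.
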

Finally, let us point out a particular case:

\begin{cor}\label{cor:4DD}
Let $q$ be even and let $m=2$, i.e., $x^\sigma=x^2$ for all $x\in K$. Then
$\Sigma=(\cP,\cB,\parallel)$ is a $4$-divisible design with parameter
$\lambda_4=1$.
\end{cor}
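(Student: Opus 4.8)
The plan is to use Corollary~\ref{cor:blocks} to upgrade the $3$-DD $\Sigma$ to a $4$-DD in the special case $m=2$. Recall that for a $4$-DD we must exhibit a constant $\lambda_4$ such that every set of four pairwise non-parallel points lies on exactly $\lambda_4$ blocks, and that the chain of inequalities $4\le k\le v/s$ holds. The latter is immediate here: $k=q+1=v/s$ and $q$ is even, so $q\ge 4$ and $k\ge 5>4$. The whole content is therefore to show that four pairwise non-parallel points always lie on exactly one common block, i.e.\ $\lambda_4=1$.

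So fix pairwise non-parallel points $p_1,p_2,p_3,x$ and let $T$ be the intersection of all blocks through $p_1,p_2,p_3$. By Corollary~\ref{cor:blocks} the number of blocks through all four points is $q$, $0$, or $1$ according to whether $x\in T$, or $x\notin T$ but $x$ is parallel to some point of $T$, or neither. The case $m=2$ makes the first two cases vanish: by Proposition~\ref{prop:blockstraces}(a) we have $T=\PP(F)^g$ for some $g\in G$, and here $F=\GF(m)=\GF(2)$ while $K=\GF(q)$ with $q$ even, so $|\PP(F)|=m+1=3$. A block has $k=q+1\ge 5$ points and is the $G$-image of $\PP(K)$, so $x\in T$ would force $x$ to be one of only $3$ points lying on $p_1,p_2,p_3$ — but then $x$ would coincide with $p_1$, $p_2$ or $p_3$ (these three non-parallel points, being on a common block that is a copy of $\PP(K)\supset\PP(F)$, already exhaust $\PP(F)^g$), contradicting that $x$ is non-parallel to, hence distinct from, each $p_i$. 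Thus $x\notin T$, killing the first case. For the second case, note that $T=\PP(F)^g$ is a set of three pairwise non-parallel points, one from each of three distinct parallel classes; since every block meets every parallel class (transversality, $k=v/s$) and $x\nparallel p_1,p_2,p_3$, if $x$ were parallel to a point $x'\in T$ then $x'$ would be one of the three points of $\PP(F)^g$, and choosing a block $B$ through $p_1,p_2,p_3$ we would have $x'\in B$ and $x\parallel x'$ with $x\notin B$; but one checks from \eqref{eq:omegaU} and Proposition~\ref{prop:blockstraces}(b) that the parallel class of each point of $T$ meets $T$ only in that point and that the remaining $q-1$ points of that class are precisely those lying on blocks through $p_1,p_2,p_3$, so such an $x$ would in fact lie on a block through $p_1,p_2,p_3$ and be non-parallel to $x'$ — a contradiction. (More directly: when $m=2$, I expect one can argue that no point outside $T$ is parallel to a point of $T$ at all, because the parallel class of a point of $T$ is entirely absorbed into the union of blocks through $p_1,p_2,p_3$.) Hence only the third case of Corollary~\ref{cor:blocks} survives, giving exactly one block through $p_1,p_2,p_3,x$, so $\lambda_4=1$.

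The main obstacle is the bookkeeping needed to rule out the middle case of Corollary~\ref{cor:blocks} cleanly: one must verify that, specifically when $m=2$, a point parallel to but distinct from a point of $T$ cannot exist among points non-parallel to $p_1,p_2,p_3$. Concretely I would reduce via $G$ to $p_1,p_2,p_3=\infty,0,1$, so $T=\PP(F)=\{\infty,0,1\}$ since $\GF(2)$ has only the elements $0,1$; then a point $x\nparallel\infty,0,1$ is of the form $R(x_1+x_2\eps,1)$ with $x_1\in K\setminus F$, and I would show using \eqref{eq:parlocal} that the only point of $\{\infty,0,1\}$ it could be parallel to is one with first coordinate in $K\setminus F$ — impossible since $\infty,0,1$ all have coordinates in $F$. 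That final comparison $x_1\in K\setminus F$ versus the $F$-coordinates of $T$ is exactly where the hypothesis $m=2$ (so $T$ is as small as possible) does the work, and it is what I expect to need the most care to state correctly.
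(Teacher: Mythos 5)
Your overall route is exactly the paper's: the paper proves this corollary in one line, observing that it is immediate from Corollary~\ref{cor:blocks} because $F=\GF(2)$ forces $|T|=|\PP(F)|=3$, so that $T=\{p_1,p_2,p_3\}$ and only the third case of Corollary~\ref{cor:blocks} can occur for a fourth point $x\nparallel p_1,p_2,p_3$. Your treatment of the first case is fine, and your closing coordinate computation (reducing to $\infty,0,1$, so $T=\PP(F)=\{\infty,0,1\}$, and checking via (\ref{eq:parlocal}) that a point $R(x_1+x_2\eps,1)$ with $x_1\in K\setminus F$ is parallel to none of them) is a correct, if redundant, verification of the same fact.

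However, your main argument for excluding the middle case contains assertions that are false and should be deleted. It is not true that the remaining $q-1$ points of the parallel class of a point of $T$ ``are precisely those lying on blocks through $p_1,p_2,p_3$'': since a block consists of pairwise non-parallel points, a block through $p_i$ contains no further point of the class of $p_i$, so those $q-1$ points lie on \emph{no} block through $p_1,p_2,p_3$ --- which is precisely the second bullet of Corollary~\ref{cor:blocks}; Proposition~\ref{prop:blockstraces}(b) only concerns parallel classes \emph{not} meeting $T$. Likewise the parenthetical claim that ``no point outside $T$ is parallel to a point of $T$ at all'' is false: each $p_i$ has $s-1=q-1\ge 3$ parallel neighbours outside $T$. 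None of this machinery is needed: once you know $T=\{p_1,p_2,p_3\}$ (which you established when killing the first case), the relation $x\parallel x'$ with $x'\in T$ contradicts the hypothesis $x\nparallel p_1,p_2,p_3$ outright. One more small point: $q$ even does not by itself give $q\ge 4$; you need the standing hypothesis $\sigma\ne\id$ of Proposition~\ref{prop:blockstraces} and Corollary~\ref{cor:blocks} (equivalently $K\ne F=\GF(2)$) to exclude $q=2$, and then $4\le k=q+1$ indeed holds.
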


This result is immediate from Corollary \ref{cor:blocks}, since $F=\GF(2)$
implies now $|T|=|\PP(F)|=3$.

\section{A geometric model}\label{sec:model}
Now we are looking for a geometric point model of the DD $\Sigma$ defined
above, i.e.\ a DD isomorphic to $\Sigma$ whose points are points of a suitable
projective space. We find such a model on the Klein quadric $\cK$ in $\PG(5,K)$
by using H.~Hotje's representation \cite{hot-76}.

\begin{rem}
One could also first find a \emph{line model} of $\Sigma$ in $\PG(3,K)$ (where
the points of $\Sigma$ are certain lines in $3$-space) and then apply the Klein
correspondence. For details on such line models see \cite{blu+h-00b}, in
particular Examples~5.2 and 5.4, and \cite{blunck-03}.
\end{rem}

We embed the ring $R=K(\eps;\sigma)$ in the ring $M =\M(2,K)$ of
$2\times2$-matrices with entries in $K$ via the ring monomorphism
\begin{equation}\label{eq:emb}
    a+b\eps\mapsto \SMat2{a&b\\0&a^\sigma}.
\end{equation}
From now on we identify the ring $R$ with its image under this embedding.
\par
The projective line $\PP(M)$ is defined, mutatis mutandis, according to
(\ref{eq:PR}). The points of $\PP(M)$ are of the form $M(A,B)$, where $(A,B)$
are the first two rows of an invertible $4\times 4$-matrix over $K$, because
(up to notation) $\GL_2(M)$ equals $\GL_4(K)$. Then (\ref{eq:emb}) allows to
identify the point set $\PP(R)$ of $\Sigma$ with a subset of $\PP(M)$.

Now we establish the existence of a bijection $\Phi$ from $\PP(M)$ onto the
Klein quadric $\cal K$. For this we notice that $M$ is a $K$-algebra, with $K$
embedded in $M$ via $x\mapsto \SMat2{x&0\\0&x}$, and that this algebra is
\emph{kinematic}, i.e., each element of $ M$ satisfies a quadratic equation
over $K$. Note that this embedding of $K$ in $M$ is different from the one
obtained from (\ref{eq:emb}), unless $\sigma=\id$. In \cite{hot-76} Hotje
embeds the projective line over an arbitrary kinematic algebra in an
appropriate quadric. For the matrix algebra $M$ this quadric is $\cK$, and the
embedding, which here is a bijection, is the following:
\begin{equation}\label{eq:Hotje}
    \Phi:\PP(M)\to \cK: M(A,B)\mapsto K(\widetilde BA,\det A,\det B),
\end{equation}
where $A,B$ are matrices in $M$, and for  $B=\SMat2{a&b\\c&d}$ we set
$\widetilde B =\SMat2{d&-b\\-c&a}$. The image of $\Phi$ is indeed the Klein
quadric, because $M\times K \times K$ is a $6$-dimensional vector space over
$K$ endowed with the hyperbolic quadratic form $(C,x,y)\mapsto \det C-xy$.

We need the following additional statements:
\begin{prop}\label{rem:hot}
Consider the bijection $\Phi:\PP(M)\to\cK$  given in (\ref{eq:Hotje}), and its
restriction to $\PP(R)$. Then
\begin{enumerate}
\item
The bijection $\Phi$ induces a homomorphism of group actions, mapping
$\GL_2(M)$, acting on $\PP(M)$, to a subgroup of the group of collineations of
$\PG(5,K)$ leaving $\cK$ invariant.
\item
This homomorphism maps the subgroup $\GL_2(R)$, acting on $\cP=\PP(R)$, to  a
subgroup of the group of collineations of $\PG(5,K)$ leaving $\cP^\Phi$
invariant.
\item
Two points of $\PP(R)$ are parallel if, and only if, their $\Phi$-images are
joined by a line contained in $\cK$.
\end{enumerate}
\end{prop}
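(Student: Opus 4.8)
The plan is to verify the three assertions about $\Phi:\PP(M)\to\cK$ in turn, exploiting the concrete formula in (\ref{eq:Hotje}).

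\medskip

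\emph{Part (a).} First I would recall that, identifying $\GL_2(M)$ with $\GL_4(K)$, a matrix $g=\SMat2{P&Q\\S&T}\in\GL_2(M)$ acts on $\PP(M)$ by $M(A,B)\mapsto M(AP+BS,\,AQ+BT)$. On the target side, $\PG(5,K)$ is the projectivization of the $6$-dimensional $K$-space $V:=M\times K\times K$ carrying the hyperbolic form $\theta:(C,x,y)\mapsto \det C-xy$, whose singular points form $\cK$. I would show that for each $g$ there is a $K$-linear map $\widehat g$ of $V$ with $\Phi\circ g=\PP(\widehat g)\circ\Phi$; concretely $\widehat g$ is built from the linear maps $C\mapsto \widetilde S A\text{-type}$ blocks obtained by plugging $AP+BS$, $AQ+BT$ into the formula $M(A,B)\mapsto K(\widetilde BA,\det A,\det B)$ and using multiplicativity of $\det$ together with the identity $\widetilde{XY}=\widetilde Y\widetilde X$ and $\widetilde X X=X\widetilde X=(\det X)I$. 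The key computation is that $\widetilde{(AQ+BT)}(AP+BS)$ is a $K$-bilinear-in-$(A,B)$ expression that, modulo the singular locus, depends only on the triple $(\widetilde BA,\det A,\det B)$; this forces $\widehat g$ to be linear and, since it preserves $\Phi(\PP(M))=\cK$, to preserve $\theta$ up to a scalar. Hence $g\mapsto \PG(\widehat g)$ is the desired homomorphism into the orthogonal collineation group of $\cK$. That the assignment is a homomorphism (not just a map) follows because $\Phi$ is a bijection: from $\Phi g=\PP(\widehat g)\Phi$ and $\Phi h=\PP(\widehat h)\Phi$ one gets $\Phi(gh)=\PP(\widehat g\,\widehat h)\Phi$, and bijectivity of $\Phi$ pins down the image of $gh$.

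\medskip

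\emph{Part (b).} This is immediate from (a): $\GL_2(R)\le\GL_2(M)$ under the embedding (\ref{eq:emb}), so its image under the homomorphism of (a) is a subgroup of orthogonal collineations, and since that image stabilizes $\PP(R)$ inside $\PP(M)$ (because $\GL_2(R)$ stabilizes $\PP(R)$), it stabilizes $\cP^\Phi=\Phi(\PP(R))$.

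\medskip

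\emph{Part (c).} Here I would argue directly from the description (\ref{eq:parlocal}) of $\parallel$ and the formula for $\Phi$, together with the classical fact that two points of $\cK$ lie on a line of $\cK$ iff the corresponding vectors span a totally singular plane, i.e.\ iff they are conjugate with respect to the bilinear form polarizing $\theta$. Writing two points of $\PP(R)$ in the normal forms of (\ref{eq:PRlocal}) — $R(x,1)$ with $x\in R$ and $R(1,z)$ with $z\in I$ — I would compute their $\Phi$-images using (\ref{eq:Hotje}) and the embedding (\ref{eq:emb}), and check case by case (two points $R(x,1),R(y,1)$; the mixed case $R(x,1),R(1,z)$; two points $R(1,z),R(1,w)$) that the polar bilinear form vanishes between the images exactly when the parallelism condition $x-y\in I$ (resp.\ always, resp.\ never in the mixed case) holds. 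Equivalently — and this is the slicker route — $R(a,b)\parallel R(c,d)$ means $\SMat2{a&b\\c&d}\notin\GL_2(M)$, i.e.\ the $4\times4$ matrix over $K$ with first rows $(A,B)$ and $(C,D)$ is singular; and one knows from the theory of the Klein quadric (or from Hotje's setup) that $\Phi(M(A,B))$ and $\Phi(M(C,D))$ span a line in $\cK$ precisely when $\SMat2{A&B\\C&D}$ is singular over $K$. So (c) reduces to this single linear-algebra equivalence, which I would prove by computing the polar form $\langle(\widetilde BA,\det A,\det B),(\widetilde DC,\det C,\det D)\rangle$ and recognizing it as (a scalar multiple of) the $4\times4$ determinant $\det\SMat2{A&B\\C&D}$.

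\medskip

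I expect the main obstacle to be Part (a): producing the linear map $\widehat g$ in closed form and checking that the nonlinear-looking quantities $\det(AP+BS)$, $\det(AQ+BT)$, $\widetilde{(AQ+BT)}(AP+BS)$ reorganize into a genuinely $K$-linear action on the coordinates $(\widetilde BA,\det A,\det B)$. The matrix identities $\widetilde{XY}=\widetilde Y\,\widetilde X$, $X\widetilde X=\widetilde X X=(\det X)I$, and $\widetilde{X+Y}=\widetilde X+\widetilde Y$ (the last because $X\mapsto\widetilde X$ is linear in dimension $2$) are the tools that make the cancellation work, but assembling them correctly — and handling the determinant-of-a-sum terms via $\det(X+Y)=\det X+\det Y+\langle$mixed term$\rangle$ with the mixed term expressible through $\widetilde XY$ — is where the real work lies. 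Parts (b) and (c) are then short: (b) is a one-line corollary of (a), and (c) is the determinantal identity above.
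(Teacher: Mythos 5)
Your route is genuinely different from the paper's: there the proof is essentially by citation --- part (a) is \cite[(7.1/2/3)]{hot-76}, (b) is noted to follow from (a), and (c) is obtained by combining \cite[(7.5)]{hot-76} with \cite[Prop.~3.2]{blu+h-00b} --- whereas you propose a self-contained verification from the explicit formula (\ref{eq:Hotje}). That is feasible, and your toolkit is the right one. For (a), writing $(C,x,y):=(\widetilde BA,\det A,\det B)$, the first coordinate of the image of $(AP+BS,\,AQ+BT)$ is $(\widetilde Q\widetilde A+\widetilde T\widetilde B)(AP+BS)=x\,\widetilde QP+\widetilde Q\,\widetilde CS+\widetilde TCP+y\,\widetilde TS$, using $\widetilde AA=(\det A)I$ and $\widetilde AB=\widetilde{(\widetilde BA)}$, and the two determinant coordinates decompose via $\det(X+Y)=\det X+\det Y+\mathrm{tr}(\widetilde XY)$; so the image triple is an honestly linear function of $(C,x,y)$ (no ``modulo the singular locus'' caveat is needed), $\widehat g$ is invertible because its image contains the cone over $\cK$, which spans the whole space, and it leaves $\cK$ invariant simply because $\Phi$ is an equivariant bijection onto $\cK$. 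What your proposal buys is independence from Hotje's paper; what it costs is exactly this computation, which you only sketch --- as written, (a) is a plan, not a proof, although the plan does go through.

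The one substantive gap is in your ``slicker route'' for (c): you silently replace the defining condition $\SMat2{a&b\\c&d}\notin\GL_2(R)$ from (\ref{eq:par}) by singularity of the corresponding $4\times4$ matrix over $K$, i.e.\ by $\notin\GL_2(M)$. This equivalence is precisely what the paper imports from \cite[Prop.~3.2]{blu+h-00b}, and it needs an argument: one direction is trivial, and for the other note that a matrix over the local ring $R$ is invertible iff it is invertible modulo $I$; so if it is not invertible, then (both rows being unimodular) one row is congruent modulo $I=K\eps$ to a left $K$-multiple of the other, and after the corresponding determinant-preserving row operation the embedded $4\times4$ matrix acquires a zero row, since $x\eps\mapsto\SMat2{0&x\\0&0}$ under (\ref{eq:emb}); hence it is singular over $K$. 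With that supplied, your identity $\det\SMat2{A&B\\C&D}=\det A\det D+\det B\det C-\mathrm{tr}(\widetilde AB\,\widetilde DC)$ (which also deserves a one-line proof, e.g.\ by polarizing $\det$) identifies the polar form of the quadric with the $4\times4$ determinant, the standard conjugacy criterion for lines on a quadric finishes (c), and (b) is indeed immediate from (a).
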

\begin{proof}
For (a) see \cite[(7.1/2/3)]{hot-76}; (b) follows from (a).

(c): This follows from \cite[(7.5)]{hot-76} and \cite[Prop.~3.2]{blu+h-00b}.
\end{proof}

Writing $K(x_1,x_2,x_3,x_4,x_5,x_6)$ instead of
$K(\SMat2{x_1&x_2\\x_3&x_4},x_5,x_6)$, we obtain by a direct computation that
the mapping $\Phi$ given in (\ref{eq:Hotje}) acts on the points of
$\cP=\PP(R)\subseteq \PP(M)$ as follows:
\begin{equation}\label{eq:map}
R(a+b\eps,1)\mapsto K(a,b,0,a^\sigma,aa^\sigma,1) ;\; R(1,c\eps)\mapsto
K(0,-c,0,0,1,0)
\end{equation}
We shall identify the elements of $\PP(M)$ with their $\Phi$-images. Then, in
particular, we have
\begin{equation}\label{eq:cap}
B_0=\{K(a,0,0,a^\sigma,aa^\sigma,1)\mid a\in K\}\cup\{K(0,0,0,0,1,0)\}.
\end{equation}

In the next lemma we collect some observations, which can be seen directly
using (\ref{eq:map}) and (\ref{eq:cap}).
\begin{lem}\label{lem:cone}
Let $\cP$ and $B_0$ be the point sets in $\PG(5,K)$ from above. Then the
following hold:
\begin{enumerate}
\item
$\cP=\cC\setminus\{S\}$, where $\cC$ is the \emph{cone} with \emph{vertex} $S
=K(0,1,0,0,0,0)$ over $B_0$, i.e.\ the union of all lines joining $S$ with
$B_0$. \item $\cP$ is entirely contained in the hyperplane $\bH$ with equation
$x_3=0$, which is the tangent hyperplane to $\cK$ at $S$.

\item
Two points of $\cP$ are parallel if, and only if, they lie on a
\emph{generator} of $\cC$, i.e. a line through $S$ contained in $\cC$.
\end{enumerate}
\end{lem}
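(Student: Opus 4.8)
The plan is to verify each of the three claims by a direct unravelling of the coordinate formulas (\ref{eq:map}) and (\ref{eq:cap}), combined with the description of the Klein quadric as the zero set of the hyperbolic form $(C,x,y)\mapsto\det C-xy$ and with part~(c) of Proposition~\ref{rem:hot}. First I would record the explicit form of a general point of $\cP$: by (\ref{eq:map}) every point is either $P_{a,b}:=K(a,b,0,a^\sigma,aa^\sigma,1)$ with $a,b\in K$, or $Q_c:=K(0,-c,0,0,1,0)$ with $c\in K$ (the latter coming from $R(1,c\eps)$ with $c\eps\in I$). Note $Q_c\ne S$ for every $c$, and $P_{a,b}\ne S$, so $S\notin\cP$; this is the content we must combine with the cone description.

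For part~(a), I would show $\cP\cup\{S\}=\cC$. Given $P_{a,b}\in\cP$, the point $P_{a,0}=K(a,0,0,a^\sigma,aa^\sigma,1)\in B_0$ lies on the line $SP_{a,b}$, since $(a,b,0,a^\sigma,aa^\sigma,1)=(a,0,0,a^\sigma,aa^\sigma,1)+b\,(0,1,0,0,0,0)$; hence $P_{a,b}$ lies on the generator of $\cC$ through $P_{a,0}$, and conversely every point $\ne S$ on that generator is some $P_{a,b}$. Similarly $Q_c=K(0,-c,0,0,1,0)$ lies on the line joining $S=K(0,1,0,0,0,0)$ to $K(0,0,0,0,1,0)\in B_0$ (the image of $R(1,0)$), because $(0,-c,0,0,1,0)=(0,0,0,0,1,0)-c\,(0,1,0,0,0,0)$. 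Thus $\cP\subseteq\cC\setminus\{S\}$, and the reverse inclusion follows because every generator of $\cC$ other than $S$ itself is exhausted by the points just listed. This gives $\cP=\cC\setminus\{S\}$.

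For part~(b): the coordinate $x_3$ is $0$ in every point of (\ref{eq:map}), so $\cP\subseteq\bH:=\{x_3=0\}$; and since $S\in\cC$ projects onto $B_0\subseteq\cP\subseteq\bH$, the whole cone $\cC$ lies in $\bH$. It remains to identify $\bH$ as the tangent hyperplane $T_S\cK$. The polar hyperplane of $S=K(0,1,0,0,0,0)$ with respect to the bilinear form associated to $\det C-xy$ is computed by polarizing: writing $C=\SMat2{x_1&x_2\\x_3&x_4}$, the bilinear form is $\tfrac12(x_1x_4'+x_1'x_4-x_2x_3'-x_2'x_3)-\tfrac12(x_5x_6'+x_5'x_6)$, and evaluating against $S$ (which has only the $x_2$-entry nonzero) picks out the coefficient of $x_3$; hence $S^\perp=\{x_3=0\}=\bH$. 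Since $S\in\cK$ (as $\det\SMat2{0&1\\0&0}=0$ and $x_5x_6=0$), the polar hyperplane $S^\perp$ is exactly the tangent hyperplane to $\cK$ at $S$, proving~(b).

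For part~(c): by Proposition~\ref{rem:hot}(c), two points of $\cP$ are parallel iff their $\Phi$-images span a line lying on $\cK$; so it suffices to show that such a line is precisely a generator of $\cC$. One direction: a generator $g$ of $\cC$ is a line through $S\in\cK$ meeting $B_0\subseteq\cK$ in a second point, and a line meeting a quadric in two points and passing through a point of the quadric either lies on the quadric or meets it in at most two points — here $g$ joins $S$ with a point of $B_0$, both on $\cK$, and lies in the tangent hyperplane $T_S\cK=\bH$, so by the standard fact that a line through $S$ contained in $T_S\cK$ and meeting $\cK$ again is a generator of $\cK$, we get $g\subseteq\cK$; thus two points of $\cP$ on a common generator of $\cC$ are joined by a line on $\cK$, hence are parallel. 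Conversely, if two points $p,p'\in\cP$ are parallel, the line $pp'$ lies on $\cK$; since $p,p'\in\cC$ and $\cC$ lies in $\bH=T_S\cK$, the plane $\langle S,p,p'\rangle$ is contained in $\bH$, and its intersection with $\cK$ is a pair of (possibly coincident) lines through $S$ — the line $pp'\subseteq\cK$ forces $pp'$ to pass through $S$ or to be one of these lines through $S$; in either case $p,p'$ lie on a common line through $S$ contained in $\cC$, i.e.\ a generator. The main obstacle is this last converse step: one must argue carefully that a line of $\cK$ through two points of the cone is in fact a line through the vertex $S$, which I would handle via the projection from $S$ (a line on $\cK$ not through $S$ would project injectively to a conic-secant line in the base $B_0$, contradicting that $B_0$ meets each generator only once and spans no line of $\cK$ off the vertex). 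Everything else is routine coordinate bookkeeping from (\ref{eq:map})–(\ref{eq:cap}).
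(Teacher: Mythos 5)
Your parts (a) and (b) are correct and are essentially the direct coordinate verification the paper has in mind (the paper in fact gives no proof, remarking that the lemma ``can be seen directly'' from (\ref{eq:map}) and (\ref{eq:cap})); the only blemish in (b) is the factor $\tfrac12$ in the polarization, which does not exist in characteristic $2$ --- use the polar form $B(u,v)=Q(u+v)-Q(u)-Q(v)$ instead, which still gives $B(S,\cdot)=-x_3$ and hence the same tangent hyperplane. The forward direction of (c) (two points on a generator are joined by a line through $S$ inside $\bH=T_S\cK$ meeting $\cK$ again, hence a line on $\cK$, hence the points are parallel by Proposition \ref{rem:hot}(c)) is also fine.

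The converse of (c) is where you have a genuine gap. You assert that the plane $\bE=\langle S,p,p'\rangle\subseteq\bH$ meets $\cK$ in ``a pair of (possibly coincident) lines through $S$''; this is unjustified, and it is exactly the dangerous case that it excludes: the Klein quadric contains planes through $S$ lying inside $T_S\cK$, so $\bE\cap\cK$ could be all of $\bE$, in which case the hypothesis $pp'\subseteq\cK$ puts no constraint forcing $S\in pp'$. Your fallback via projection from $S$ also does not produce a contradiction as stated: projecting $pp'$ from $S$ merely yields the secant line $\bar p\,\bar p'$ of $B_0$, and a secant of $B_0$ contradicts nothing (``$B_0$ meets each generator only once'' is irrelevant here). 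A correct completion along your lines: since $p,p'\in\cK\cap T_S\cK$, the lines $Sp$ and $Sp'$ lie on $\cK$; if $S\notin pp'$ these together with $pp'$ are three non-concurrent lines in $\bE\cap\cK$, so $\bE\subseteq\cK$; then the line joining the two distinct base points $\bar p,\bar p'\in B_0$ lies on $\cK$, so by Proposition \ref{rem:hot}(c) these two points of $B_0=\PP(K)$ would be parallel, contradicting that $B_0$ consists of pairwise non-parallel points. Alternatively --- and this is the route the paper intends --- skip the quadric geometry entirely: by (\ref{eq:parlocal}) and (\ref{eq:map}) the parallel class of $R(a+b\eps,1)$ maps onto $\{K(a,b',0,a^\sigma,aa^\sigma,1)\mid b'\in K\}$, which is exactly the generator through $K(a,0,0,a^\sigma,aa^\sigma,1)$ with $S$ removed, and the class $\{R(1,c\eps)\mid c\in K\}$ maps onto the generator through $K(0,0,0,0,1,0)$ with $S$ removed; since distinct generators meet only in $S$, claim (c) follows at once.
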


Now we describe the (image of) the base block $B_0$ more closely:

\begin{lem}\label{lem:cap}
Let $B_0$ be as in (\ref{eq:cap}). Then the following hold:
\begin{enumerate}
\item
$B_0$ is a \emph{cap}, i.e.\ a set of points no three of which are collinear.

\item
If $\sigma=\id$, then $B_0$ is  a regular conic; in particular, $B_0$ is
contained in a plane.

\item
If $\sigma\ne\id$, then $B_0$ spans the $3$-space $\bU_0$, given by
$x_2=0=x_3$, complementary to $S$ in $\bH$.
\end{enumerate}
\end{lem}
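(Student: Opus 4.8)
The plan is to work directly from the coordinate description in~(\ref{eq:cap}), writing a general point of $B_0$ as $P_a=K(a,0,0,a^\sigma,aa^\sigma,1)$ for $a\in K$, together with the extra point $P_\infty=K(0,0,0,0,1,0)$. First I would handle the affine part: a typical point $P_a$ lies in the plane $\pi$ defined by $x_2=x_3=0$ (the coordinates being $(x_1,x_2,x_3,x_4,x_5,x_6)$), and within $\pi$ it has homogeneous coordinates $(a,\,a^\sigma,\,aa^\sigma,\,1)$. So the map $a\mapsto P_a$ is a version of the curve $\{(t,t^\sigma,t\cdot t^\sigma,1)\}$. To prove part~(a) I would take three points $P_a,P_b,P_c$ with $a,b,c$ distinct (the case involving $P_\infty$ is an easy separate check, since $P_\infty$ has $x_6=0$ while all $P_a$ have $x_6\ne 0$) and show the $3\times 4$ matrix of their coordinates has rank $3$; equivalently, no nontrivial linear relation $\alpha P_a+\beta P_b+\gamma P_c=0$ exists. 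The last coordinate forces $\alpha+\beta+\gamma=0$, the first forces $\alpha a+\beta b+\gamma c=0$, and the fourth forces $\alpha a^\sigma+\beta b^\sigma+\gamma c^\sigma=0$; since $\sigma$ is injective and $a,b,c$ are distinct, a short argument (a $\sigma$-twisted Vandermonde, or simply subtracting to reduce to a $2\times2$ system) shows $\alpha=\beta=\gamma=0$. That $B_0$ is a cap is precisely this no-three-collinear property, so~(a) is done.

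For part~(b), when $\sigma=\id$ we have $P_a=K(a,0,0,a,a^2,1)$, which visibly lies in the plane $x_1=x_4$ inside $\pi$, i.e.\ in a plane of $\PG(5,K)$; restricting the Klein quadratic form $\det C-x_5x_6$ (with $C=\SMat2{x_1&x_2\\x_3&x_4}$) gives, on these points, $x_1x_4-x_5x_6=a^2-a^2=0$, so $B_0$ lies on $\cK$; and within its plane it is the conic parametrized by $(a:a:a^2:1)$ — after eliminating the redundant coordinate, the set $\{(a,a^2,1)\mid a\in K\}\cup\{(0,1,0)\}$, which is the standard regular conic. I would just exhibit the defining quadratic equation of that plane conic and note it is nondegenerate. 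Part~(b) also immediately implies~(a) in that case, but stating~(a) uniformly is cleaner.

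For part~(c), with $\sigma\ne\id$, I would first record that all points $P_a$ and $P_\infty$ satisfy $x_2=0=x_3$, so $B_0\subseteq\bU_0$, and that $\bU_0$ is a $3$-space (it is cut out by the two independent hyperplanes $x_2=0$ and $x_3=0$), which is complementary to $S=K(0,1,0,0,0,0)$ inside $\bH:x_3=0$ since $S\notin\bU_0$ and $\dim\bU_0+\dim S=3+0=\dim\bH-1\cdot$(check: inside the $4$-space $\bH$, a $3$-space and a point off it span $\bH$ and meet trivially). Then the real content is that $B_0$ \emph{spans} all of $\bU_0$: I need four points of $B_0$ that are linearly independent. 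Take $P_\infty=(0,0,0,1)$ (in $\bU_0$-coordinates $(x_1,x_4,x_5,x_6)$, so $P_\infty$ reads $(0,0,1,0)$ and $P_a$ reads $(a,a^\sigma,aa^\sigma,1)$) and three points $P_a,P_b,P_c$; the $4\times4$ determinant is, up to sign, a determinant whose vanishing would force $P_a,P_b,P_c,P_\infty$ dependent. I expect this to be the main obstacle: one must show that for $\sigma\ne\id$ one can always choose $a,b,c\in K$ (distinct) so that $P_a,P_b,P_c,P_\infty$ are independent — equivalently that the three points $(a,a^\sigma,aa^\sigma)$, $(b,b^\sigma,bb^\sigma)$, $(c,c^\sigma,cc^\sigma)$ are not coplanar with the direction $(0,0,1)$, i.e.\ that $(a,a^\sigma)$, $(b,b^\sigma)$, $(c,c^\sigma)$ are affinely independent in the plane. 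This fails for all triples iff the curve $t\mapsto(t,t^\sigma)$ lies on a line, i.e.\ iff $t^\sigma$ is an affine function of $t$ on $K$; since $\sigma$ is additive and fixes $1$ but $\sigma\ne\id$, $t^\sigma=\mu t+\nu$ would give $\nu=0$ and $\mu=1$, a contradiction. Hence suitable $a,b,c$ exist, $B_0$ contains four independent points of the $3$-space $\bU_0$, and therefore spans it. I would close by remarking that~(c), combined with the cone description in Lemma~\ref{lem:cone}, is what makes the $4$-space model genuinely $4$-dimensional rather than collapsing to a plane.
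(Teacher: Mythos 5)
Your parts (b) and (c) are fine: (b) is essentially the paper's argument, and in (c) your existence argument (the graph of $\sigma$ cannot lie on an affine line since $\sigma$ is additive, fixes $1$, and is not the identity) is a legitimate variant of the paper's more explicit choice of the four points with coordinates $(0,0,0,0,1,0)$, $(0,0,0,0,0,1)$, $(1,0,0,1,1,1)$, $(a,0,0,a^\sigma,aa^\sigma,1)$, $a\in K\setminus F$, where independence reduces to $a^\sigma\ne a$.

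Part (a), however, has a genuine gap. The three conditions you extract from the coordinates $x_1$, $x_4$, $x_6$ do \emph{not} force $\alpha=\beta=\gamma=0$: eliminating $\gamma$ leaves a $2\times2$ system whose determinant is $(a-c)(b-c)^\sigma-(b-c)(a-c)^\sigma$, and this vanishes precisely when $(a-c)/(b-c)\in F=\Fix(\sigma)$, which happens for many distinct triples (take $c=0$, $a\ne0$, $b=\lambda a$ with $\lambda\in F\setminus\{0,1\}$; concretely, for $K=\GF(9)$, $\sigma:x\mapsto x^3$, the triple $a=1$, $b=2$, $c=0$ is linearly dependent in the coordinates $x_1,x_4,x_6$). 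Moreover, for $\sigma=\id$ --- a case that part (a) must also cover --- this determinant vanishes identically. So the ``$\sigma$-twisted Vandermonde'' you invoke is not nonzero in general; you must bring in the fifth coordinate $x_5=aa^\sigma$. With it the computation does close (writing $u=a-c$, $v=b-c$, the first three equations give $u/v=:\lambda\in F^*$ and $\alpha\lambda=-\beta$, and the $x_5$-equation then forces $\lambda^2=\lambda$, i.e.\ $a=b$, a contradiction), but that is exactly the step you skipped, and it is the whole content of the cap property. The paper avoids this computation entirely: if a line carried three points of $B_0$, it would lie on the quadric $\cK$, and then by Proposition~\ref{rem:hot}(c) the three points would be pairwise parallel, contradicting the fact that $B_0=\PP(K)$ consists of pairwise non-parallel points. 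Either repair (the $x_5$-computation or the quadric/parallelism argument) is fine, but as written your proof of (a) is incomplete.
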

\begin{proof}
(a): Assume that the line $L$ carries three points of $B_0$. Then $L\subseteq
\cK$. From Proposition \ref{rem:hot}(c) we see that the three points are
pairwise parallel, a contradiction.

(b): Here $B_0=\{K(a,0,0,a,a^2,1)\mid a\in K\}\cup\{K(0,0,0,0,1,0)\}$, which
obviously is a regular conic in the plane spanned by the points
$K(1,0,0,1,0,0)$, $K(0,0,0,0,1,0)$, $K(0,0,0,0,0,1)$ (namely, the intersection
of this plane with the Klein quadric).

(c): In this case, the four vectors
\begin{equation*}
    (0,0,0,0,1,0),\;(0,0,0,0,0,1),\;(1,0,0,1,1,1),
    \mbox{~and~}(a,0,0,a^\sigma,aa^\sigma,1),
\end{equation*}
with $a\in K\setminus F$, are linearly independent, so the point set $B_0$
spans $\bU_0$.
\end{proof}

In case $\sigma=\id$, our geometric model is nothing else than the ``cylinder
model'' of the Miquelian Laguerre plane $\Sigma$: The points are the points of
a cylinder in $3$-space (a quadratic cone minus its vertex), and the blocks are
the regular conics on the cylinder (the intersections with planes complementary
to the vertex). See, e.g., \cite[I.2]{benz-73} for the real case.

We have a closer look at the special case that $\sigma^2=\id$, $\sigma\ne \id$.
Then $q=m^2$ and $K$ is a quadratic extension of $F$. In this case there are
\emph{Baer subspaces}, i.e. spaces coordinatized by $F$, in each projective
space over $K$.

\begin{prop}\label{prop:Baer}
Let $\sigma^2=\id$, $\sigma\ne \id$. Then $B_0$ is an elliptic quadric in the
Baer subspace $\BB\cong \PG(3,m)$ of $\bU_0\cong\PG(3,q)$ defined by the
$F$-subspace
\begin{equation}\label{eq:Baer}
\{(x,0,0,x^\sigma,f_1,f_2)\mid x\in K, f_i\in F\}.
\end{equation}
\end{prop}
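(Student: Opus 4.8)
The plan is to exhibit $B_0$ as the intersection of a Baer subspace $\BB$ of $\bU_0$ with a quadratic surface, and to verify that this intersection is an elliptic quadric. First I would observe that the $F$-subspace $W$ displayed in \eqref{eq:Baer} has $F$-dimension $4$ and spans $\bU_0$ over $K$ (indeed, for $x\in K\setminus F$ the vectors coming from $x=1$ and this $x$, together with $(0,0,0,0,1,0)$ and $(0,0,0,0,0,1)$, are $K$-independent, exactly as in the proof of Lemma~\ref{lem:cap}(c)). Hence $W$ defines a Baer subspace $\BB\cong\PG(3,m)$ of $\bU_0\cong\PG(3,q)$. Next, using the description \eqref{eq:cap}, each point $K(a,0,0,a^\sigma,aa^\sigma,1)$ with $a\in K$ has a representative vector lying in $W$ (take $f_1=aa^\sigma\in F$, since $(aa^\sigma)^\sigma=a^\sigma a^{\sigma^2}=a^\sigma a=aa^\sigma$, and $f_2=1\in F$); and the point $K(0,0,0,0,1,0)$ also lies in $W$. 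So $B_0\subseteq\BB$.

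Then I would identify $B_0$ inside $\BB$ as a quadric. Coordinatize $\BB$ by $(y_0,y_1,y_2,y_3)$ where a point of $\BB$ is $K(x,0,0,x^\sigma,f_1,f_2)$ with $x=y_0\theta_0+y_1\theta_1$ for a fixed $F$-basis $\{\theta_0,\theta_1\}$ of $K$ (say $\theta_0=1$), $f_1=y_2$, $f_2=y_3$; here all $y_i\in F$. The relation cut out by $B_0$ is $f_1f_2 = x\cdot x^\sigma = N_{K/F}(x)$ (read off from the two non-trivial entries $aa^\sigma$ and $1$, and noting a point of $\BB$ lies on $B_0$ precisely when its first, fourth entries are $x,x^\sigma$ and its last two multiply to $N_{K/F}(x)$, with $K(0,\dots,0,1,0)$ appearing as the point at infinity $y_3=x=0$). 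Thus in $\BB$ the set $B_0$ is the surface $N_{K/F}(y_0\theta_0+y_1\theta_1) = y_2y_3$, i.e. $Q(y_0,y_1) = y_2y_3$ where $Q$ is the norm form of the quadratic extension $K/F$ written in the basis $\{\theta_0,\theta_1\}$. Since $K/F$ is a (separable) quadratic extension, $Q$ is an anisotropic binary quadratic form over $F=\GF(m)$; adjoining the hyperbolic plane $y_2y_3$ yields a nondegenerate quaternary form of non-maximal Witt index, i.e. an elliptic quadratic form. Hence $B_0$ is an elliptic quadric in $\BB\cong\PG(3,m)$.

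I would wrap up by noting that this is consistent with Lemma~\ref{lem:cap}(a): an elliptic quadric in $\PG(3,m)$ is a cap, and a cap in the Baer subspace $\BB$ remains a cap in the ambient $\bU_0$ (three $\BB$-points collinear in $\bU_0$ are already collinear in $\BB$), and $|B_0|=q+1=m^2+1$ is precisely the number of points of an elliptic quadric in $\PG(3,m)$, so no points are missed. The main obstacle I anticipate is purely bookkeeping: getting the coordinate change into $\BB$ right and confirming that the defining equation of $B_0$, once restricted to $W$, is genuinely $N_{K/F}(x)=f_1f_2$ with the correct behaviour at the point $K(0,0,0,0,1,0)$, and then recognizing norm-form-plus-hyperbolic-plane as the elliptic type over $\GF(m)$. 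Everything else — the dimension count for $\BB$, the containment $B_0\subseteq\BB$, the cap property — is routine given Lemma~\ref{lem:cap} and the standard classification of quadrics in $\PG(3,m)$.
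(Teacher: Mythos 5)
Your proposal is correct, and up to the last step it follows the same path as the paper: identify the set \eqref{eq:Baer} as a $4$-dimensional $F$-subspace giving a Baer subspace $\BB$ of $\bU_0$, check $B_0\subseteq\BB$ (your verification that $aa^\sigma\in F$ via $\sigma^2=\id$ is exactly what is needed), and recognize $B_0$ as the quadric $N(x)=f_1f_2$ in $\BB$, with $K(0,0,0,0,1,0)$ accounted for as the solution with $f_2=0$, $x=0$. Where you diverge is the decisive step, namely why this quadric is \emph{elliptic}: the paper simply invokes Lemma \ref{lem:cap}(a) --- $B_0$ is a cap, and the only quadrics in $\PG(3,m)$ that are caps are the elliptic ones --- whereas you argue algebraically that the norm form of the quadratic extension $K/F$ is anisotropic, so the quaternary form ``norm form plus hyperbolic plane'' has Witt index $1$ and is therefore of elliptic type. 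Both arguments are sound (anisotropy of the finite-field norm and nonsingularity of the resulting form hold in all characteristics, including $m$ even); the paper's route is shorter because the cap property has already been established, while yours is self-contained on the quadratic-forms side and makes the elliptic type visible directly from the algebra, using the cap property and the point count $|B_0|=m^2+1$ only as a consistency check rather than as the proof.
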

\begin{proof}
Obviously, the set in (\ref{eq:Baer}) is a $4$-dimensional subspace of $K^6$,
seen as a vector space over $F$, satisfying the equations $x_2=0=x_3$ and hence
giving rise to a Baer subspace $\BB$ of $\bU_0$. The elements of $B_0$ all lie
in $\BB$. Moreover, by (\ref{eq:cap}), $B_0$ equals the quadric in $\BB$
determined by $N(x)=f_1f_2$, where $N(x)=xx^\sigma$ is the norm of $x\in K$
with respect to the field extension $K:F$ and, in particular, $N$ is a
quadratic form on the vector space $_F K$. Since $B_0$ is a cap by
 \ref{lem:cap} (a), the quadric must be elliptic.
\end{proof}
The quadratic form used in the above is just the restriction to $\BB$ of the
quadratic form describing the Klein quadric. The intersection of the Klein
quadric and $\bm U_0$ is a hyperbolic quadric.

For the rest of this section we consider the case that $\sigma\ne\id$. We try
to describe the geometric model of the DD $\Sigma$ more explicitly. From the
above we know that our base block $B_0$ is a certain cap that spans a $3$-space
$\bU_0$ complementary to $S$ in the tangent hyperplane $\bH\cong\PG(4,K)$ of
$\cK$ at $S$. In the next proposition we describe all blocks. Together with
Lemma \ref{lem:cone} this gives a description of $\Sigma$ in terms of
$\PG(4,K)$.

\begin{prop}\label{prop:blocksGeom}
Let $\sigma\ne \id$. Then the blocks of $\Sigma$ are exactly the intersections
of the cone $\cC$ with the $3$-spaces complementary to $S$ in $\bH$.
\end{prop}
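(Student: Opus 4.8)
The plan is to show a two-way inclusion between the block set $\cB$ and the set of sections of $\cC$ by $3$-spaces complementary to $S$ in $\bH$. One direction is essentially already in place: by Lemma~\ref{lem:cap}(c) the base block $B_0$ spans the $3$-space $\bU_0$, which is complementary to $S$ in $\bH$; moreover $B_0\subseteq\cC$ trivially, and since $B_0$ is a cap of $q+1$ points in $\bU_0$ while every generator of $\cC$ meets $\bU_0$ in a single point (any such generator is a line through $S$, $S\notin\bU_0$, and complementary spaces in $\bH$ meet $\cC$ in exactly one point per generator), we get $B_0=\cC\cap\bU_0$. So $B_0$ is itself a section of the required type. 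It remains to push this through the $G$-action. First I would record that $\Phi$ conjugates the action of $\GL_2(R)$ on $\cP$ into a group of collineations of $\PG(5,K)$ that fixes $\cK$ (Proposition~\ref{rem:hot}(a),(b)); since every such collineation fixes $\cP^\Phi=\cC\setminus\{S\}$ and the parallelism, and since $S$ is the unique point of $\cC$ not in $\cP^\Phi$ (and is characterised intrinsically, e.g.\ as the vertex, or as the pole of $\bH$ with respect to $\cK$), each collineation in the image fixes $S$ and hence maps the cone $\cC$ to itself and permutes the $3$-spaces complementary to $S$ in $\bH$ among themselves. Consequently, for every $g\in G$ the set $B_0^g=(\cC\cap\bU_0)^g=\cC\cap\bU_0^g$ is again such a section, proving $\cB\subseteq\{\,\cC\cap\bW : \bW\text{ complementary to }S\text{ in }\bH\,\}$.

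For the reverse inclusion I would count. The number of $3$-spaces complementary to a fixed point $S$ in $\bH\cong\PG(4,K)$ equals the number of hyperplanes of $\bH$ not through $S$, which is $q^4$. On the other hand, the number of blocks of $\Sigma$ is $|\cB|=|G|/|G_{B_0}|$; combining this with the value of $\lambda_3$ from Theorem~\ref{prop:DD} (for $\sigma\ne\id$, namely $\lambda_3=q$) and formula~(\ref{eq:lambdaA}), $\lambda_3=\frac{|G|}{|G_{B_0}|}\cdot\frac{1}{s^3}$ with $s=q$, gives $|\cB|=\lambda_3 s^3=q\cdot q^3=q^4$. Since $\cB$ is contained in the set of $q^4$ admissible sections and both sets have cardinality $q^4$, the inclusion is an equality.

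The one point that needs a little care — and which I expect to be the main (small) obstacle — is the claim that a $3$-space $\bW$ complementary to $S$ in $\bH$ meets the cone $\cC$ in exactly one point on each generator, so that $\cC\cap\bW$ is a transversal object of size $q+1$ rather than something degenerate. This is where one uses that $\cC$ is a genuine cone with vertex $S$ over the cap $B_0$: each generator is a line $SP$ with $P\in B_0$, $S\notin\bW$, and $\bW$ complementary to $S$ in $\bH$ forces $SP\cap\bW$ to be a single point; distinct generators give distinct points because two generators meet only in $S$. Hence every section $\cC\cap\bW$ is a set of $q+1$ points, one per parallel class, i.e.\ a transversal subset of $\cP^\Phi$; this matches the block structure and makes the cardinality count above meaningful. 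Once this is settled, the two inclusions together with the equality of cardinalities complete the proof.
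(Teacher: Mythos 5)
Your proof is correct and follows essentially the same route as the paper: establish $B_0=\cC\cap\bU_0$, transport it by the collineations induced by $\GL_2(R)$ (which fix $S$, $\bH$ and hence $\cC$) to conclude that every block is a section by a complement of $S$ in $\bH$, and then finish by comparing the number of blocks, $|G|/|G_{B_0}|=\lambda_3 s^3=q^4$, with the $q^4$ complements of $S$ in $\bH$. The only cosmetic differences are that the paper sets up the explicit map from blocks to complements and checks its injectivity, whereas you compare the two families of point sets directly, and that for the fixation of $S$ the paper uses the characterisation of $S$ as the common point of the lines spanned by the parallel classes, which is the cleanest of the intrinsic descriptions you allude to (the ``vertex'' description alone would be circular before invariance of $\cC$ is known).
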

\begin{proof}
We know that $B_0=\cC\cap \bU_0$, with $\bU_0$ complementary to $S$ in $\bH$.
Let $B$ be any block. Then $B=B_0^g$ for some $g\in G=\GL_2(R)\le\GL_2(M)$. By
Proposition \ref{rem:hot}(b), $g$ induces a collineation, say $\widetilde g$,
of $\PG(5,K)$ leaving $\cK$ and $\cP$ invariant. This collineation fixes $S$
(which is the intersection of the lines corresponding to parallel classes) and
its tangent hyperplane $\bH$. So $B$, seen as a set of points in $\bH$, is
$B=B_0^{\widetilde g}=\cC\cap \bU_0^{\widetilde g}$, where $\bU_0^{\widetilde
g}$ is a $3$-space complementary to $S$, as desired. The $3$-space
$\bU_0^{\widetilde g}$ is independent of the choice of $g$, as it is nothing
else than the span of $B$.

So we have  a mapping from the set of blocks to the set of complements of $S$
in $\bH$, which is injective since each complement contains exactly one point
of each generator of $\cC$, i.e.\ of each parallel class of $\cP$, and hence
cannot belong to more than one  block. A simple counting argument shows that
the mapping is also surjective: The number of blocks is $b=|G|/|G_{B_0}|=q^4$
(this can be computed directly, or from (\ref{eq:lambdaA}) using
$\lambda_3=q$), and the number of complements   of $S$ in $\bH$ also is $q^4$,
because they form an affine $4$-space of order $|K|=q$.
\end{proof}

\begin{rem}
The projective model of $\Sigma$ studied in this section is a special case of
the lifted $t$-DDs described in \cite[Cor.~3.3]{blu+h+z-07}. There, the
following geometries are described as $t$-DDs obtained via the lifting process:
Consider an arbitrary finite projective space $\PG(n,q)$ and a set $B_0$ of $k$
points spanning a subspace $\bU_0$ and having the property that any $t$ points
of $B_0$ are independent. Let $\bS$ be a complement of $B_0$. The point set of
the $t$-DD is the cone with basis $B_0$ and vertex $\bS$, minus $\bS$. The
blocks are the intersections of the cone with subspaces complementary to $\bS$,
and two points are parallel if, and only if, together with $\bS$ they span the
same subspace.
\end{rem}
The following is an obvious geometric analogue of Proposition
\ref{prop:blockstraces} and Corollary \ref{cor:blocks}.

\begin{cor}\label{cor:blocksGeom}
Let $p_1,p_2,p_3$ be pairwise non-parallel, let $T$ be the intersection of all
blocks through $p_1,p_2,p_3$, and let $x\nparallel p_1,p_2,p_3$. Then
\begin{enumerate}
\item
$T$ is the intersection of the cone $\cC$ with the plane $\bE$ spanned by
$p_1,p_2,p_3$.
\item
The blocks through $p_1,p_2,p_3,x$ are exactly the intersections of $\cC$ with
$3$-spaces through $\bE$ complementary to $S$. The number of such $3$-spaces is
\begin{itemize}
\item $q$, if  $x\in T$, \item $0$, if $x\notin T$, but $x\parallel x'$ for
some $x'\in T$, \item $1$, otherwise.
\end{itemize}
\end{enumerate}
\end{cor}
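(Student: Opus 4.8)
The plan is to argue entirely inside the tangent hyperplane $\bH\cong\PG(4,K)$ and to reduce everything to Proposition \ref{prop:blocksGeom}, Lemma \ref{lem:cone}, and elementary dimension counting. Recall from the proof of Proposition \ref{prop:blocksGeom} that each block equals $\cC\cap\bm U$ for a unique $3$-space $\bm U$ complementary to $S$ in $\bH$, and that $\bm U$ is the span of that block.

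First I would prove (a). Since there is at least one block $B$ through $p_1,p_2,p_3$, and $B$ is a cap (it is the image of the cap $B_0$ of Lemma \ref{lem:cap}(a) under the collineation induced by a suitable $g\in\GL_2(R)$, cf.\ Proposition \ref{rem:hot}(b)), the points $p_1,p_2,p_3$ are non-collinear and span a plane $\bE$. The span $\bm U$ of any block through these points satisfies $\bE\subseteq\bm U$ and $S\notin\bm U$, so in particular $S\notin\bE$. Hence the blocks through $p_1,p_2,p_3$ are exactly the sets $\cC\cap\bm U$ with $\bE\subseteq\bm U$ and $\bm U$ a $3$-space complementary to $S$. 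Such $3$-spaces are precisely the members of the pencil of hyperplanes of $\bH$ through $\bE$, with the single exception of $\langle S,\bE\rangle$; so there are $q$ of them (in accordance with $\lambda_3=q$) and any two of them meet exactly in $\bE$. As $q\ge 2$, intersecting all blocks through $p_1,p_2,p_3$ yields $T=\cC\cap\bE$, which is (a).

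Next I would carry out the counting in (b). A block $\cC\cap\bm U$ through $p_1,p_2,p_3$ contains $x$ if and only if $x\in\bm U$ (note $x\in\cP\subseteq\cC$), i.e.\ if and only if the admissible $3$-space $\bm U$ also passes through $x$; this already gives the first assertion of (b). Now I distinguish cases. If $x\in\bE$, equivalently $x\in\cC\cap\bE=T$, then all $q$ admissible $\bm U$ contain $x$, so there are $q$ blocks. If $x\notin\bE$, equivalently $x\notin T$, then every $3$-space through $\bE$ and $x$ equals $\langle\bE,x\rangle$, so there is at most one admissible $\bm U$, namely $\langle\bE,x\rangle$, and it is admissible if and only if $S\notin\langle\bE,x\rangle$, equivalently $x\notin\langle S,\bE\rangle$. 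Hence in this case the number of blocks is $1$ if $x\notin\langle S,\bE\rangle$ and $0$ if $x\in\langle S,\bE\rangle$.

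The only step I expect to require care is matching the incidence condition $x\in\langle S,\bE\rangle$ with the condition ``$x\parallel x'$ for some $x'\in T$''. By Lemma \ref{lem:cone}(c), $x\parallel x'$ means that $x$ and $x'$ lie on a common generator of $\cC$, i.e.\ on a common line through $S$. If such an $x'$ lies in $T=\cC\cap\bE$, then $x$ lies on $\langle S,x'\rangle\subseteq\langle S,\bE\rangle$, so $x\in\langle S,\bE\rangle$. Conversely, if $x\in\langle S,\bE\rangle$, the generator $\langle S,x\rangle$ of $\cC$ through $x$ is contained in the $3$-space $\langle S,\bE\rangle$ and, since $S\notin\bE$, is not contained in $\bE$; a line and a plane of a $3$-space always meet, so $\langle S,x\rangle$ meets $\bE$ in a point $x'\in\cC\cap\bE=T$, and then $x\parallel x'$. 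This closes the case distinction and completes the proof of (b); everything else is routine Grassmann dimension counting together with Proposition \ref{prop:blocksGeom} and Lemma \ref{lem:cone}.
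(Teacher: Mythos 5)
Your proof is correct. It does not, however, follow the paper's route: the paper gives no argument at all, presenting the corollary as an ``obvious geometric analogue'' of Proposition \ref{prop:blockstraces} and Corollary \ref{cor:blocks}, i.e.\ the intended proof is simply to transport the already established counts $q$, $0$, $1$ through the dictionary provided by Proposition \ref{prop:blocksGeom} and Lemma \ref{lem:cone}. You bypass Proposition \ref{prop:blockstraces} and Corollary \ref{cor:blocks} entirely and re-derive everything synthetically inside $\bH\cong\PG(4,K)$: using Proposition \ref{prop:blocksGeom} (plus the uniqueness of the complement spanned by a block, taken from its proof), the blocks through $p_1,p_2,p_3$ correspond bijectively to the $q$ members of the pencil of $3$-spaces through the plane $\bE$ avoiding $S$ (the excluded member being $\langle S,\bE\rangle$), which gives $T=\cC\cap\bE$; the trichotomy $q/0/1$ then falls out of whether $x\in\bE$, $x\in\langle S,\bE\rangle\setminus\bE$, or $x\notin\langle S,\bE\rangle$, and your translation of the last two cases into the parallelism condition via Lemma \ref{lem:cone}(c) (the generator $\langle S,x\rangle$ lies in $\langle S,\bE\rangle$ and must meet $\bE$ in a point of $T$) is exactly right. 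The auxiliary inputs you use are also legitimate: $\lambda_3\ge 1$ guarantees a block through $p_1,p_2,p_3$, and Proposition \ref{rem:hot}(b) with Lemma \ref{lem:cap}(a) shows blocks are caps, so the three points do span a plane. What your approach buys is a self-contained incidence-geometric proof that in effect re-proves the counting of Corollary \ref{cor:blocks} within the model, independent of the group-theoretic analysis (traces $\PP(F)$, the regular action of $\widehat{U}$ on parallel classes); the price is redoing work the paper had already done algebraically, which is why the authors could dismiss the corollary in one line.
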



Andrea Blunck, Department Mathematik, Universit\"{a}t Hamburg, Bun\-des\-stra{\ss}e 55,
D-20146 Hamburg, Germany,\newline \texttt{andrea.blunck@math.uni-hamburg.de}
\\~\\
Hans Havlicek, Institut f\"{u}r Diskrete Mathematik und Geometrie, Tech\-ni\-sche
Universit\"{a}t Wien, Wiedner Hauptstra{\ss}e 8--10, A-1040 Wien, Austria,\newline
\texttt{havlicek@geometrie.tuwien.ac.at}
\\~\\
Corrado Zanella, Dipartimento di Tecnica e Gestione dei Sistemi Industriali,
Universit\`{a} di Padova, Stradella S. Nicola, 3, I-36100 Vicenza, Italy,\newline
\texttt{corrado.zanella@unipd.it}
\end{document}